\newcommand{\Z}[0]{\mathbb{Z}}
\newcommand{\nl}[0]{\newline}
\newcommand{\ssthat}[2]{\lbrace #1 \;\big|\; #2 \rbrace}
\newenvironment{enumerate*}%
{\begin{enumerate}[(I)]%
		\setlength{\itemsep}{10pt}%
		\setlength{\parskip}{0pt}}%
	{\end{enumerate}}
\begin{document}
	
	\title[]{Cylindrical Hitomezashi Patterns}
	\subjclass[2020]{}
	
	\author[Edwin Xie]{Edwin Xie}
	\address[]{Department of Mathematics, Stanford University, Stanford, CA 95014, USA}
	\email{ex26@stanford.edu}
	
	\maketitle
	
	\begin{abstract}
		Pete discovered a strong combinatorial description of hitomezashi loops via a bijection to pairs of Dyck paths of the same height.
		Our main theorem provides an analogous description of hitomezashi loops of nonzero homology class on certain cylindrical hitomezashi patterns. In the process, we complete some of Ren and Zhang's results on the count and possible homology classes of toroidal hitomezashi loops.
	\end{abstract}
	
	\section{Introduction}\label{sec:intro}
	
		Hitomezashi, a Japanese hand embroidery style, has recently attracted attention for its interesting combinatorial and geometric properties. After the Youtube channel \emph{Numberphile} popularized the mathematical definition of hitomezashi patterns in \cite{Numberphile}, Defant and Kravitz \cite{DefantKravitz} established fundamental structural properties of loops present in hitomezashi patterns (called ``hitomezashi loops''), chiefly that the length of such a loop is congruent to 4 modulo 8 and the area it encloses is congruent to 1 modulo 4. Hitomezashi patterns were also previously studied by Pete ~\cite{Pete} under the name \emph{corner percolation}. One of his main results was a bijection between hitomezashi loops and pairs of Dyck paths of the same height arising from the binary encodings of the horizontal and vertical edges in the underlying hitomezashi pattern.

		Ren and Zhang \cite{RenZhang} extended the study of hitomezashi patterns to the torus, establishing modular-theoretic results on the lengths of loops with nontrivial homology class, analogous to the results of Defant and Kravitz in \cite{DefantKravitz}. They also gave a partial classification of the homology classes that occur and, in certain cases, explicitly enumerated the corresponding loops. In the special case where the horizontal and vertical strings coincide, they applied a clever knot-theoretic argument to derive further constraints on the number of nontrivial loops.

		This paper considers hitomezashi patterns on a cylinder. Analogous to Pete's \cite{Pete} result in the planar case, we establish a bijection relating the appearances of hitomezashi loops of nontrivial homology class to certain subsequences of the encoding strings. We then apply these results to toroidal hitomezashi patterns. Extending the work of Ren and Zhang \cite{RenZhang}, we completely classify loops with nontrivial homology and use this to count loops with given homology.
	    
		First, we recall the original ``unoriented" formulation of hitomezashi patterns; in the interest of generalizing to the cylinder and torus, we later introduce the ``oriented" version due to Ren and Zhang.
		
		\begin{definition}
			\label{defn:cloth}
			Let $G, H \in \lbrace \mathbb{Z}\rbrace \cup \ssthat{ \mathbb{Z}/N\Z}{N\in \Z_+}$. The corresponding grid graph, denoted $\Cloth_{G\times H}$, is the graph on $G\times H$ with $(i, j)$ adjacent to $(i, j\pm 1)$ and $(i \pm 1, j)$.
		\end{definition}
		
		\begin{definition}
			\label{defn:unoriented}
			Let $x, y\in \lbrace -1, 1\rbrace^\Z$. The corresponding \textbf{unoriented planar hitomezashi pattern} is the subgraph of $\Cloth_{\Z\times \Z}$ with edge set
				\[\lbrace \lbrace (i, j), (i+x_j, j)\rbrace: i \equiv 0 \bmod{2}\rbrace \cup \lbrace \lbrace (i, j), (i, j+y_i) \rbrace: j \equiv 0 \bmod{2}\rbrace\]
			An \textbf{unoriented planar hitomezashi path} is a path in an unoriented planar hitomezashi pattern. An \textbf{unoriented planar hitomezashi loop} is a cycle in an unoriented planar hitomezashi pattern. See Figure \ref{fig:mydiagram}.
		\end{definition}
\par\noindent

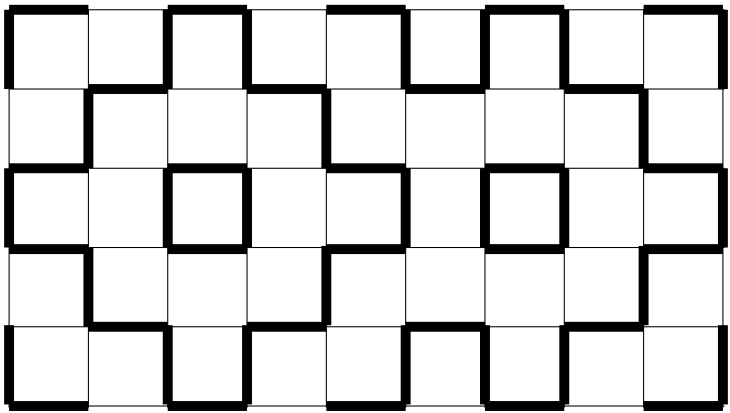
\begin{figure}[ht]
    \centering

\begin{tikzpicture}[x=0.75pt,y=0.75pt,yscale=-0.8,xscale=0.8]

\draw  [draw opacity=0] (106,32) -- (557,32) -- (557,283) -- (106,283) -- cycle ; \draw   (106,32) -- (106,283)(156,32) -- (156,283)(206,32) -- (206,283)(256,32) -- (256,283)(306,32) -- (306,283)(356,32) -- (356,283)(406,32) -- (406,283)(456,32) -- (456,283)(506,32) -- (506,283)(556,32) -- (556,283) ; \draw   (106,32) -- (557,32)(106,82) -- (557,82)(106,132) -- (557,132)(106,182) -- (557,182)(106,232) -- (557,232)(106,282) -- (557,282) ; \draw    ;
\draw [line width=3.75]    (106,32) -- (128,32) -- (156,32) ;
\draw [line width=3.75]    (206,32) -- (256,32) ;
\draw [line width=3.75]    (306,32) -- (328,32) -- (356,32) ;
\draw [line width=3.75]    (406,32) -- (428,32) -- (456,32) ;
\draw [line width=3.75]    (506,32) -- (528,32) -- (556,32) ;
\draw [line width=3.75]    (106,132) -- (128,132) -- (156,132) ;
\draw [line width=3.75]    (206,132) -- (256,132) ;
\draw [line width=3.75]    (306,132) -- (328,132) -- (356,132) ;
\draw [line width=3.75]    (406,132) -- (428,132) -- (456,132) ;
\draw [line width=3.75]    (506,132) -- (528,132) -- (556,132) ;
\draw [line width=3.75]    (106,183) -- (128,183) -- (156,183) ;
\draw [line width=3.75]    (206,183) -- (256,183) ;
\draw [line width=3.75]    (306,183) -- (328,183) -- (356,183) ;
\draw [line width=3.75]    (406,183) -- (428,183) -- (456,183) ;
\draw [line width=3.75]    (506,183) -- (528,183) -- (556,183) ;
\draw [line width=3.75]    (106,282) -- (128,282) -- (156,282) ;
\draw [line width=3.75]    (206,282) -- (256,282) ;
\draw [line width=3.75]    (306,282) -- (328,282) -- (356,282) ;
\draw [line width=3.75]    (406,282) -- (428,282) -- (456,282) ;
\draw [line width=3.75]    (506,282) -- (528,282) -- (556,282) ;
\draw [line width=3.75]    (156,82) -- (206,82) ;
\draw [line width=3.75]    (256,82) -- (278,82) -- (306,82) ;
\draw [line width=3.75]    (356,82) -- (378,82) -- (406,82) ;
\draw [line width=3.75]    (456,82) -- (478,82) -- (506,82) ;
\draw [line width=3.75]    (156,232) -- (206,232) ;
\draw [line width=3.75]    (256,232) -- (278,232) -- (306,232) ;
\draw [line width=3.75]    (356,232) -- (378,232) -- (406,232) ;
\draw [line width=3.75]    (456,232) -- (478,232) -- (506,232) ;
\draw [line width=3.75]    (106,32) -- (106,82) ;
\draw [line width=3.75]    (106,132) -- (106,182) ;
\draw [line width=3.75]    (106,231) -- (106,253) -- (106,281) ;

\draw [line width=3.75]    (256,32) -- (256,82) ;
\draw [line width=3.75]    (256,132) -- (256,182) ;
\draw [line width=3.75]    (256,231) -- (256,253) -- (256,281) ;

\draw [line width=3.75]    (456,32) -- (456,82) ;
\draw [line width=3.75]    (456,132) -- (456,182) ;
\draw [line width=3.75]    (456,231) -- (456,253) -- (456,281) ;

\draw [line width=3.75]    (406,32) -- (406,82) ;
\draw [line width=3.75]    (406,132) -- (406,182) ;
\draw [line width=3.75]    (406,231) -- (406,253) -- (406,281) ;

\draw [line width=3.75]    (206,32) -- (206,82) ;
\draw [line width=3.75]    (206,132) -- (206,182) ;
\draw [line width=3.75]    (206,231) -- (206,253) -- (206,281) ;

\draw [line width=3.75]    (356,32) -- (356,82) ;
\draw [line width=3.75]    (356,132) -- (356,182) ;
\draw [line width=3.75]    (356,231) -- (356,253) -- (356,281) ;

\draw [line width=3.75]    (556,32) -- (556,82) ;
\draw [line width=3.75]    (556,132) -- (556,182) ;
\draw [line width=3.75]    (556,231) -- (556,253) -- (556,281) ;

\draw [line width=3.75]    (156,82) -- (156,132) ;
\draw [line width=3.75]    (156,182) -- (156,232) ;
\draw [line width=3.75]    (306,82) -- (306,132) ;
\draw [line width=3.75]    (306,181) -- (306,203) -- (306,231) ;
\draw [line width=3.75]    (506,82) -- (506,132) ;
\draw [line width=3.75]    (506,181) -- (506,203) -- (506,231) ;

\end{tikzpicture}
\caption{A section of an unoriented hitomezashi pattern.}
    \label{fig:mydiagram}
\end{figure}

	Note that each vertex in the hitomezashi pattern has exactly two neighbors. Thus, each connected component of an unoriented planar hitomezashi pattern is either an infinite unoriented planar hitomezashi path or an unoriented planar hitomezashi loop.
	
	Attempting to define an unoriented hitomezashi pattern analogously on the grid $\Cloth_{\Z/M\Z \times \Z/N\Z}$ fails if either of $M$ or $N$ are odd, since the notions $i \equiv 0 \bmod {2}, j \equiv 0 \bmod{2}$ no longer make sense. In the interest of studying hitomezashi patterns on toroidal grids $\Z/M\Z \times \Z/N\Z$, Ren and Zhang introduced a natural generalization of hitomezashi patterns using oriented edges.
	
	\begin{definition}
		\label{defn: oriented}
		Let $x \in \lbrace -1, 1\rbrace^H, y \in \lbrace -1, 1\rbrace^G$. The corresponding \textbf{oriented hitomezashi pattern}, denoted $\Cloth_{G\times H}(x,y)$, is defined as the following choice of orientation of $\Cloth_{G\times H}$. Orient $(i, j) \rightarrow (i+1, j)$ if $x_j = 1$, and orient $(i, j) \leftarrow (i+1, j)$ if $x_j = -1$. Symmetrically, orient $(i, j) \rightarrow (i, j+1)$ if $y_i = 1$, and orient $(i, j) \rightarrow (i, j+1)$ if $y_i = -1$.
		An \textbf{oriented hitomezashi loop} is a circuit in an oriented hitomezashi pattern that alternates between horizontal and vertical edges. An \textbf{oriented hitomezashi path} is a path in an oriented hitomezashi pattern that alternates between horizontal and vertical edges.
	\end{definition}

\begin{figure}[ht]
    \centering
  \includegraphics[height=8cm]{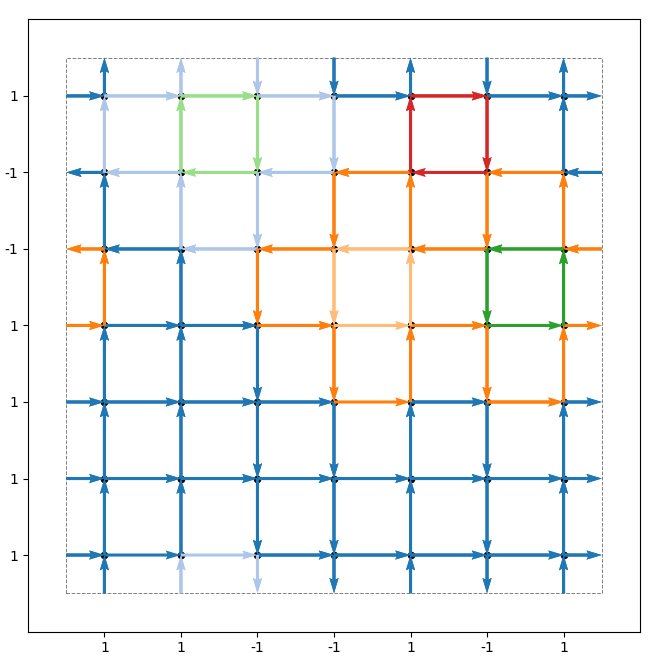}
    \caption{A toroidal hitomezashi pattern on $\Z/7\Z \times \Z/7\Z$ with $x = \lbrace1,1,1,1,-1,-1,1\rbrace$, $y = \lbrace 1,1,-1,-1,1,-1,1\rbrace$.}\label{fig1}
\end{figure}
    
	Oriented hitomezashi patterns with $G, H \in \ssthat{\mathbb{Z}, \mathbb{Z}/2N\Z}{N\in \Z_+}$ decompose into two unoriented hitomezashi patterns in $\Cloth_{G\times H}$; see \cite[Section 1]{RenZhang} for details.
	
Henceforth we will work exclusively with oriented hitomezashi patterns, and after Section 1 we will assume that all hitomezashi patterns, loops, or paths are oriented. We call hitomezashi patterns on grids $G\times H$ \textbf{planar} if $G$ and $H$ are both infinite, \textbf{cylindrical} if one of $G$ and $H$ is infinite and the other is finite, and \textbf{toroidal} if both are finite. We will also refer to the hitomezashi paths and loops on such grids with the same modifiers. Without loss of generality, given a cylindrical hitomezashi loop on $\Cloth_{G\times H}$ we will always assume $G$ is infinite and $H$ is finite.

    We will need the following definitions to help compute homology classes.
\par\noindent
\begin{definition}
		\label{defn:cylinderhomology}
		Let $L$ be a cylindrical hitomezashi loop. The \textbf{homology class} of $L$ is $\Delta y/N$, where $\Delta y$ is the difference of the number of edges in $L$ of the form $(i, j) \rightarrow (i, j+1)$ and the number of edges in $L$ of the form $(i, j+1) \rightarrow (i, j)$. Loops with homology class 0 are called \textbf{trivial}; loops with nonzero homology class are called \textbf{nontrivial}.
	\end{definition}

\begin{definition}
\label{defn:toroidhomology}
Let $L$ be a toroidal hitomezashi loop. The \textbf{homology class} of $L$ is $(\Delta x/M, \Delta y/N)$ where the definition of $\Delta x$ is analogous to that of $\Delta y$.
Loops with homology class 0 are called \textbf{trivial}; loops with nonzero homology class are called \textbf{nontrivial}.
\end{definition}

	We recall some observations made by Ren and Zhang in \cite{RenZhang} about toroidal hitomezashi patterns. We can make analogous observations about cylindrical hitomezashi patterns.
	\begin{observation}
		\label{obs:possiblehomology}
		 On any given toroidal hitomezashi pattern, any two hitomezashi loops have no transverse intersection with each other. Further, all hitomezashi loops has no transverse intersection with itself. Since the intersection number of loops with homology classes $(m, n)$ and $(m', n')$ is $mn' - m'n$, all loops must have homology class of the form $(ku, kv)$ for some integer $k$ and for some fixed $u, v$ coprime. Since nonprimitive loops must have a transverse self-intersection, any nontrivial toroidal hitomezashi loop must have homology $\pm(u, v)$ with $u, v$ coprime. By a similar line of reasoning, nontrivial cylindrical hitomezashi loops must have homology class $\pm 1$.
	\end{observation}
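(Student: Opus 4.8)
The plan is to isolate the single combinatorial input---that hitomezashi loops meet one another and themselves only tangentially, never transversally---and then feed this into two standard facts about closed curves on the torus and the cylinder: that the algebraic intersection pairing detects parallelism of homology classes, and that a closed curve with only removable (tangential) self-intersections represents a primitive or trivial class.

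I would begin with the local analysis at a vertex $v=(i,j)$, which is the heart of the matter. Since a loop alternates horizontal and vertical edges, every time it passes through $v$ it enters along an edge of one type and leaves along an edge of the other; hence each passage traces a $90^\circ$ corner and never a straight segment. The orientations at $v$ are rigid: the two horizontal edges are oriented by $x_j$ and the two vertical edges by $y_i$, so there is exactly one incoming and one outgoing edge of each type. A passage therefore uses either the pair (incoming horizontal, outgoing vertical) or the pair (incoming vertical, outgoing horizontal), and these two admissible corners are built from complementary edge sets, so they occupy opposite quadrants at $v$. Consequently at most two strands pass through any vertex, and when two do---whether from the same loop or from two different loops---they form opposite corners that touch at $v$ but do not cross. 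This proves the first two sentences of the Observation.

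Next I would pass to the quotient surface. A toroidal loop is a closed curve on the torus whose class in $H_1\cong\mathbb{Z}^2$ is $(\Delta x/M,\Delta y/N)$, and the algebraic intersection number of classes $(m,n)$ and $(m',n')$ is the determinant $mn'-m'n$. Because any two loops meet in finitely many tangential points, each contributing $0$ to the algebraic count, the intersection number of any two loops vanishes; thus $mn'-m'n=0$ for every pair, so all classes are pairwise parallel. Pairwise parallel vectors in $\mathbb{Z}^2$ lie on one rational line, whose primitive lattice generator I call $(u,v)$ with $\gcd(u,v)=1$, and every loop class is then an integer multiple $k(u,v)$. To rule out $|k|\geq 2$, I use that a closed curve with no transverse self-intersection can be made embedded by a homotopy that pushes apart each tangential touching one at a time (a tangency, unlike a transverse crossing, is removable by a small normal perturbation and keeps the domain a single circle). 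An embedded closed curve on the torus has primitive or trivial class, so the nontrivial loops have class $\pm(u,v)$. The cylindrical case is identical with $H_1(S^1\times\mathbb{R})\cong\mathbb{Z}$: the class $\Delta y/N$ is the winding number about the finite direction, an embedded curve winds $0$ or $\pm 1$ times, and the same removability argument forces a nontrivial cylindrical loop to have class $\pm 1$.

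I expect the main obstacle to be the passage from ``only tangential intersections'' to a genuine embedding, i.e.\ justifying that a nonprimitive class forces a transverse self-intersection. One must first perturb each loop to a smooth immersion so that transversality is well defined, verify that the corner dichotomy produces exactly the removable (non-crossing) tangencies under this perturbation, and then check that the successive normal pushes neither create new crossings nor split the loop into several circles---so that the resulting embedded curve still carries the original homology class. The alternative covering-space/intermediate-value route (writing the transverse-coordinate difference $g(t)=r(t)-r(t+1/k)$ of two strands and noting $g(t)+g(t+1/k)+\cdots+g(t+(k-1)/k)=0$, whence $g$ changes sign and two strands cross) reaches the same conclusion but requires controlling a possibly non-monotone angular coordinate. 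By contrast, the local corner analysis is entirely elementary once the alternation and the row/column orientation conventions are unwound.
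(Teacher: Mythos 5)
Your proposal is correct and takes essentially the same route as the paper: the Observation's inline reasoning \emph{is} the proof --- tangential-only intersections coming from the rigid corner structure at each vertex, vanishing algebraic intersection numbers forcing all homology classes onto a single primitive line $k(u,v)$, and primitivity of classes represented by curves without transverse self-intersection, with the cylindrical case handled by the same winding-number argument. You have simply filled in the local vertex analysis and the tangency-removal details that the paper (following Ren and Zhang) leaves implicit.
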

The following simple quantities will allow us to use the encoding strings to predict the appearances of hitomezashi loops.
	\begin{definition}
		\label{defn:Kforline}
		Let $x\in \lbrace -1 , 1\rbrace^{\Z}$. Let $a, b \in \Z$. When $a \leq b$, define
		\[\Sigma_a^b (x) = \sum_{a\leq i\leq b} x_i.\]
        In the interest of respecting the properties
        $\Sigma_a^b (x) + x_{b+1} = \Sigma_a^{b+1}(x)$ and $\Sigma_a^b (x) + x_{a-1} = \Sigma_{a-1}^b(x)$, when $a > b+1$, define $\Sigma_a^b (x) = -\Sigma_{b+1}^{a-1}(x)$ and when $a = b+1$ define $\Sigma_a^b(x) = 0$. Define
		\[\Sigma_{max} (x) = \sup \lbrace \Sigma_a^b (x): a,b\in \Z, a \leq b + 1\rbrace\]
		\[\Sigma_{min} (x) = \inf \lbrace \Sigma_a^b (x): a,b\in \Z, a \leq b + 1\rbrace\]
	\end{definition}
    It will often be useful to lift a cylindrical hitomezashi pattern to a planar hitomezashi pattern. To this end, we define lifts of the binary encoding strings.
	\begin{definition}
		\label{defn:Kforloop}
		Let $x\in \lbrace -1 , 1\rbrace^{\Z/N\Z}$. The \textbf{lift} of $x$ to $\Z$ is the sequence $\tilde{x}\in \lbrace -1, 1\rbrace^\Z$ obtained by mapping indices via the projection $\pi: \Z \to \Z/N\Z$, i.e., \[\tilde{x}_i = x_{\pi(i)}\]
		Analogously define 
		\[\Sigma(x) = \sum_{i\in \Z/N\Z} x_i\]
		\[\Sigma_a^b(x) = \Sigma_a^b(\tilde{x})\]
		\[\Sigma_{max}(x) = \Sigma_{max}(\tilde{x})\]
		\[\Sigma_{min}(x) = \Sigma_{min}(\tilde{x})\] where $\tilde{x}$ is the lift of $x$ to $\Z$.
	\end{definition}
    In the case when $\Sigma(x) = 0$, note that $\Sigma_a^b(x)+\Sigma_{b+1}^{a+CN-1}(x) = 0$ for some sufficiently large constant $C$. Thus $\Sigma_{max}(x) = -\Sigma_{min}(x)$. Further, as $\Sigma_{a-N}^b(x) = \Sigma_a^{b+N}(x) = \Sigma_a^b(x)$, the maximum and minimum must be finite, and obtained by a sequence of length less than $N$. We define the \textbf{range} of $x$, denoted $r(x)$, to be $\Sigma_{max}(x) = -\Sigma_{min}(x)$.
    
        In the planar setting, Pete introduces the notion of the \textbf{enclosing rectangle} of a hitomezashi loop, defined as the smallest region of the form $[a, b] \times [c, d]$ containing the hitomezashi loop, and then considers the pair of subsequences $x_c, x_{c+1}, \ldots, x_d$ and $y_a, y_{a+1}, \ldots, y_b$. We will analogously attribute subsequences to cylindrical hitomezashi loops.
	\begin{definition}
		\label{defn:bounding}
		Let $x\in \lbrace -1, 1\rbrace^{\Z/N\Z}$ and $y\in \lbrace -1, 1\rbrace^\Z$, and suppose that $\Sigma(x) = 0$.\nl
		Let $L$ be a cylindrical hitomezashi loop. The \textbf{bounding cylinder} of $L$ is the smallest cylinder of the form $\ssthat{(x, y)}{m \leq x \leq M}$ containing $L$. The \textbf{bounding sequence} of $L$ is the sequence $y_m, y_{m+1}, \ldots, y_M$.
	\end{definition}

        Notice that trivial cylindrical hitomezashi loops in $\Cloth_{\Z \times \Z/N\Z}(x, y)$ can be interpreted as planar hitomezashi loops in $\Cloth_{\Z \times \Z}(\tilde{x}, y)$. In fact, roughly speaking, nontrivial cylindrical hitomezashi loops occur only when a subsequence accumulates too large a sum for a trivial hitomezashi loop to form.
        
	\begin{definition}    
		\label{defn:overflowing}
		Let $x\in \lbrace -1, 1\rbrace^{\Z/N\Z}$ and $y\in \lbrace -1, 1\rbrace^\Z$, and suppose that $\Sigma(x) = 0$. Define a subsequence $y_m, y_{m+1}, \ldots, y_M$ of $y$ to be \textbf{positively overflowing} if $\Sigma_m^M(y) > r(x)$.
		Further call such a subsequence \textbf{minimally positively overflowing} if, for any indices $m < m' \leq M' < M$, the subsequence $y_{m'}, y_{m'+1},\ldots, y_{M'}$ is not positively overflowing. Define the notions of \textbf{negatively overflowing} and \textbf{minimally negatively overflowing} analogously.
	\end{definition}

	\begin{theorem}
		\label{thm:cylindergenerate}
		Let $x\in \lbrace -1, 1\rbrace^{\Z/N\Z}$ and $y\in \lbrace -1, 1\rbrace^\Z$, and suppose that $\Sigma(x) = 0$.
		For any hitomezashi loop $L$ with homology class $1$ in the hitomezashi pattern $\Cloth_{\Z\times \Z/N\Z}(x,y)$, the bounding sequence of $L$ is minimally positively overflowing. Conversely, if $y_m, y_{m+1},\ldots, y_M$ is minimally positively overflowing, there exists a unique hitomezashi loop $L$ of homology $1$ with bounding sequence $y_m, y_{m+1},\ldots, y_M$. The analogous statements hold for hitomezashi loops of homology class $-1$ and minimally negatively overflowing sequences.
	\end{theorem}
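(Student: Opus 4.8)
The plan rests on the observation that an oriented hitomezashi pattern is \emph{deterministic}: by Definition~\ref{defn: oriented}, at each vertex an alternating orientation-respecting circuit has exactly one admissible incoming and one admissible outgoing edge of each type, so the maximal alternating circuits are precisely the orbits of an invertible successor map, and two distinct loops are vertex-disjoint as oriented alternating curves. First I would set up the covering $\Z\times\Z\to\Z\times\Z/N\Z$: a homology-$1$ loop $L$ lifts to a bi-infinite alternating path in $\Cloth_{\Z\times\Z}(\tilde x,y)$ that is invariant under the deck transformation $(i,j)\mapsto(i,j+N)$, is confined to the columns $[m,M]$ of the bounding cylinder of $L$ (Definition~\ref{defn:bounding}), and gains exactly $N$ in height per period. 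Since $\Sigma(x)=0$, the partial-sum function $F(j)=\Sigma_1^{\,j}(\tilde x)$ is $N$-periodic, and its total range equals $r(x)=\Sigma_{max}(x)=-\Sigma_{min}(x)$.

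The engine of the forward direction is a \emph{height-pinning} invariant: measured at the lower endpoint of each vertical step of the lifted path, the quantity $i-F(j)$ is unchanged across every upward step (a one-line computation from $F(j+1)-F(j)=\tilde x_{j+1}$ and the fact that the horizontal move following an upward step is by $x_{j+1}$), and changes only across downward steps, in a way dictated by the columns where $y=-1$. In the extreme case with no downward steps this forces $i=F(j)+C$, so $L$ has horizontal width exactly $r(x)$, visits every column of $[m,M]$, and has $y\equiv 1$ there, giving $\Sigma_m^M(y)=r(x)+1>r(x)$. For the general case I would argue that $\Sigma_m^M(y)$ measures the net upward push available across the full width $[m,M]$, and that if $\Sigma_m^M(y)\le r(x)$ the pinning forces the lifted path to return to its initial height, i.e. to close up as a homology-$0$ loop, contradicting that $L$ climbs by $N$. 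Hence the bounding sequence is positively overflowing (Definition~\ref{defn:overflowing}). I expect this implication to be the main obstacle: one must control how the downward excursions forced by the $-1$'s of $y$ interact with the horizontal spreading bounded by $r(x)$, and verify that they can never conspire to produce a net climb unless the total strictly exceeds $r(x)$.

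For the converse I would run the same pinning mechanism in reverse. Given a minimally positively overflowing $[m,M]$, the fact that an orbit confined to a strip closes up trivially only when that strip is non-overflowing shows that the orbits meeting $[m,M]$ cannot all be trivial, so some homology-$1$ loop $L$ with bounding cylinder $[m_0,M_0]\subseteq[m,M]$ exists. Applying the already-established forward overflow statement to $L$ shows $[m_0,M_0]$ is overflowing; since $[m,M]$ has no proper overflowing subinterval, $[m_0,M_0]=[m,M]$, so the bounding cylinder of $L$ is exactly $[m,M]$ and its bounding sequence is the prescribed one. Uniqueness follows because distinct homology-$1$ loops are disjoint and parallel by Observation~\ref{obs:possiblehomology}, hence linearly ordered across the cylinder; determinism of the successor map, together with the pinning relation that fixes the offset $C$ from the leftmost column, leaves at most one homology-$1$ loop spanning exactly $[m,M]$.

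Finally, for minimality I would use that $L$, being noncontractible, is a \emph{separating} barrier curve: winding once around the cylinder, it disconnects the columns on its left from those on its right. If some proper subinterval $[m',M']\subsetneq[m,M]$ were positively overflowing, the converse construction would produce a homology-$1$ loop $L'$ confined to $[m',M']$; by Observation~\ref{obs:possiblehomology} the loops $L$ and $L'$ have no transverse intersection, so $L$ must lie entirely on one side of the barrier $L'$ and therefore cannot reach both column $m$ and column $M$, contradicting that $[m,M]$ is the bounding cylinder of $L$. Thus no proper subinterval overflows, which is exactly the required minimality. The statements for homology class $-1$ and minimally negatively overflowing sequences follow from the reflection $y\mapsto -y$ (equivalently, reversing every vertical edge), which interchanges climbing with descending loops and positive with negative overflow.
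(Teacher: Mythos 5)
Your scaffolding (lifting to the plane, separation arguments for uniqueness and minimality) matches the paper's, but the two load-bearing steps of the theorem are never actually proven, and you flag one of them yourself. First, the forward inequality: you assert that if $\Sigma_m^M(y)\le r(x)$ then ``the pinning forces the lifted path to return to its initial height,'' but give no argument, calling it ``the main obstacle.'' It is; and your invariant cannot close it. The quantity $i-F(j)$ is conserved only across upward steps, and you have no control over what happens at the downward steps forced by the $-1$'s of $y$. The paper's resolution (Proposition \ref{prop:satisfies}) is to use a quantity conserved along the \emph{entire} loop, the height function of Section \ref{sec:height}: the height $\Sigma_0^j(x)-\Sigma_0^i(y)$ of adjacent regions is constant along any hitomezashi path, because it replaces your column coordinate $i$ by the weighted count $\Sigma_0^i(y)$, which absorbs the downward steps. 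With it, one picks $a,b$ realizing $\Sigma_a^b(x)=r(x)$, notes a homology-$1$ loop has an upward edge $(c_j,j)\to(c_j,j+1)$ at every $j$, and equates the heights of the edges at $j=-1$ and $j=b$ to get the exact identity $\Sigma_{c_{-1}}^{c_b}(y)=r(x)+1$ --- a two-line computation rather than a delicate interaction argument.

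Second, your converse rests on the claimed ``fact that an orbit confined to a strip closes up trivially only when that strip is non-overflowing,'' which is precisely the existence statement (the paper's Proposition \ref{prop:exists}) restated, not proven; nothing in your proposal rules out that every orbit meeting $[m,M]$ is a trivial loop or an infinite path. The paper proves existence again by height comparison: take the orbit through the edge $(m,-1)\to(m,0)$, compute the heights of the three kinds of edges that would let it escape or descend --- edges $(m,j)\to(m-1,j)$, edges $(M,j)\to(M+1,j)$, and edges $(i,0)\to(i,-1)$ --- and check each differs from the orbit's height, so the orbit is confined to a finite region, never crosses below $j=0$, and is therefore a loop with $\Delta y>0$, hence homology $1$. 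Your uniqueness and minimality arguments via non-intersection and separation are sound and essentially identical to the paper's, but without the height function (or an equivalent full conserved quantity) both the forward inequality and the existence step remain genuine gaps, and they are where all the content of Theorem \ref{thm:cylindergenerate} lives.
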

	
	It is natural to ask what happens when $\Sigma(x) \neq 0$. We show that in this regime there is a global obstruction to the existence of nontrivial cylindrical hitomezashi loops.
	\begin{theorem}
		\label{thm:cylinderdegenerate}
		Let  $x\in \lbrace -1, 1\rbrace^{\Z/N\Z}$ and $y\in \lbrace -1, 1\rbrace^\Z$, and suppose that $\Sigma(x) \neq 0$.
		Then no nontrivial cylindrical hitomezashi loops exist in $\Cloth_{\Z\times \Z/N\Z}(x,y)$. Further, there exists an infinite cylindrical hitomezashi path with an edge $(i, c_i) \rightarrow (i+1, c_{i})$ for each $i\in \Z$.
	\end{theorem}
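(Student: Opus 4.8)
The plan is to pass to the planar lift and show that a nonzero value of $\Sigma(x)$ forces the horizontal coordinate of any vertically-winding trajectory to grow without bound, so that it can neither close up into a loop nor avoid sweeping across every column. I treat $\Sigma(x)>0$; the case $\Sigma(x)<0$ is symmetric under the reflection $i\mapsto -i$, which swaps the two horizontal orientations (so ``rightward'' is replaced by ``leftward''). Since $\Sigma(x)>0$, a window of $cN$ consecutive indices has sum $c\,\Sigma(x)\to\infty$, so $\Sigma_{max}(x)=+\infty$; the ``horizontal room'' $r(x)$ governing trivial loops in Theorem~\ref{thm:cylindergenerate} is now infinite, which is the heuristic reason no nontrivial loop can form. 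I will also use the deterministic description of the pattern: from a corner $(i,j)$ the unique outgoing horizontal edge is $(i,j)\to(i+x_j,j)$ and the unique outgoing vertical edge is $(i,j)\to(i,j+y_i)$, so corners are partitioned into orbits of a bijective map. Consequently a trajectory confined to columns $[m,M]$ lives in the finite graph $[m,M]\times\Z/N\Z$ and is therefore periodic, i.e.\ a loop; hence \emph{every infinite trajectory is unbounded in the column direction}.

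For nonexistence of nontrivial loops I would lift a hypothetical homology-$1$ loop $L$ to $\Cloth_{\Z\times\Z}(\tilde x,y)$, where it becomes an embedded bi-infinite path $\tilde L$ invariant under $(0,N)$, climbing $N$ rows and returning to its starting column over each period. Writing $u_j,d_j$ for the upward/downward crossings of the level between rows $j$ and $j+1$ per period, homology $1$ gives $u_j-d_j=1$ for all $j$, and counting the single horizontal step made on each visit to a row yields
\[
\sum_{j\in\Z/N\Z} u_j\,(x_j+x_{j+1}) \;=\; \Sigma(x).
\]
The subtle point, and the main obstacle, is that this identity alone does \emph{not} contradict $\Sigma(x)\neq0$: it can be solved with $u_j\ge 1$ by letting $\tilde L$ ``dawdle'' with compensating down-crossings in the minority rows where $x_j=-1$. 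To exclude this one must use embeddedness together with the rigidity that all vertical steps in a fixed column $i$ share the direction $y_i$. A downward step can occur only in a ``down-column'' ($y_i=-1$), whereas the ascent of $\tilde L$ through the majority rows ($x_j=+1$) forces the columns it traverses to be ``up-columns'' ($y_i=+1$); since $\tilde L$ has net horizontal displacement $0$ per period, it must revisit those columns while moving back leftward, and the downward steps needed there to cancel the rightward drift demand $y_i=-1$ in columns already committed to $y_i=+1$. I expect to formalize this conflict by an extremal argument at the rightmost column of $\tilde L$—where every vertical step runs from a row with $x_j=+1$ to an adjacent row with $x=-1$—together with the non-crossing structure of the horizontal arcs in each row, peeling columns inward to induct on the width. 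This geometric input (no transverse self-intersection, Observation~\ref{obs:possiblehomology}, and the per-column direction constraint) is essential precisely because the combinatorial crossing identity is satisfiable. Granting that no embedded climbing curve with zero net horizontal drift exists, no homology-$1$, and hence by Observation~\ref{obs:possiblehomology} no nontrivial, loop exists.

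Finally, for the infinite path I would produce a bi-infinite trajectory $P$ unbounded in both column directions and crossing every vertical line rightward. By the dichotomy of the first paragraph it suffices to exhibit one column-unbounded trajectory and track its drift: the same crossing/direction analysis shows that an infinite (hence non-loop) trajectory accumulates horizontal displacement at the positive rate set by $\Sigma(x)>0$, so $i\to+\infty$ as the trajectory is followed forward and $i\to-\infty$ backward. Its net crossing of each vertical line $i+\tfrac12$ is then $+1$, so it contains at least one rightward edge $(i,c_i)\to(i+1,c_i)$ in every column, as required. Existence of such a trajectory can be extracted as a limit of the loop constructions of Theorem~\ref{thm:cylindergenerate} on truncated cylinders, or directly by starting the dynamics at a corner enclosed by no trivial loop. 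Verifying that a single trajectory simultaneously escapes in both directions and meets every column is the one remaining routine point; the genuine difficulty throughout is the embeddedness-based drift estimate described above.
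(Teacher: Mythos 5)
Your proposal has a genuine gap at its core, and you acknowledge it yourself: after deriving the crossing identity $\sum_j u_j(x_j+x_{j+1})=\Sigma(x)$ you correctly observe that it is satisfiable and hence yields no contradiction, and the ``embeddedness-based drift estimate'' that is supposed to finish the non-existence argument is never carried out --- it appears only as ``I expect to formalize this conflict by an extremal argument\dots'' and ``Granting that no embedded climbing curve with zero net horizontal drift exists\dots''. That drift estimate \emph{is} the theorem; without it nothing is proved. The existence half inherits the same gap: the claim that an infinite trajectory ``accumulates horizontal displacement at the positive rate set by $\Sigma(x)>0$'' is exactly the unproven estimate again, and you additionally leave open both the construction of the trajectory (``extracted as a limit\dots or directly\dots'') and the possibility that an infinite trajectory escapes to $+\infty$ in the column direction in \emph{both} time directions (both endpoints on the same side), in which case it need not cross every column. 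Your first-paragraph dichotomy (bounded trajectories are loops, by finiteness of the strip and determinism of the dynamics) is correct, but it does not by itself produce the required bi-infinite rightward-crossing path.

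The paper's proof shows that all of this machinery can be bypassed by counting $\Delta x$ over \emph{all} trajectories in a strip rather than analyzing a single one. Restrict the pattern to $\ssthat{(x,y)}{a\le x\le b}$: loops and paths with both endpoints on the same wall contribute $\Delta x=0$, so summing $\Delta x$ over everything gives $(b-a)\bigl[\#\text{rightward paths}-\#\text{leftward paths}\bigr]=(b-a)\Sigma(x)$, whence $\Sigma(x)\neq 0$ forces a path crossing the full strip (Lemma \ref{lem:deltax}). Non-existence is then immediate: such a crossing path through a strip slightly wider than the bounding cylinder of a hypothetical nontrivial loop would have to intersect it transversally, which is impossible. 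For existence, one applies the lemma to successively wider strips and uses the pigeonhole principle on the $N$ edges $\lbrace (0,j),(1,j)\rbrace$: if no bi-infinite crossing path existed, one could produce $N+1$ distinct loops or confined paths each using one of these $N$ edges, a contradiction. You may find it instructive that this global counting argument replaces precisely the local geometric rigidity you were unable to establish.
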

    
	In Section \ref{sec:height}, we briefly cover the bijection Pete \cite{Pete} discovered in the planar case, and describe the notion of ``height", to be used throughout the remainder of the paper. We prove Theorem \ref{thm:cylindergenerate} and Theorem \ref {thm:cylinderdegenerate} in Section \ref{sec:cylinderresults}. Then, in Section \ref{sec:torusresults} we apply our study of cylindrical hitomezashi loops to resolve some questions posed by Ren and Zhang in \cite[Section 5]{RenZhang} about toroidal hitomezashi loops.
	\begin{corollary}
		\label{cor:range}
		Suppose $x\in \lbrace -1, 1\rbrace^{\Z/N\Z}$ and $y\in \lbrace -1, 1\rbrace^{\Z/M\Z}$ and $\Sigma(x) = \Sigma(y) = 0$.
		\begin{itemize}
			\item If $r(x) < r(y)$ then loops with homology class $(0, 1)$ and $(0, -1)$ exist.
			\item If $r(x) = r(y)$ then no nontrivial loops exist.
			\item If $r(x) > r(y)$ then loops with homology class $(1, 0)$ and $(-1, 0)$ exist.
		\end{itemize}
	\end{corollary}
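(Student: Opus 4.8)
The plan is to reduce to the cylindrical results of Theorem~\ref{thm:cylindergenerate} by unrolling the torus in each of the two directions, and then to read off the trichotomy from a single computation of which windows of $\tilde x$ and $\tilde y$ overflow.

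First I would set up the two unrollings. Unrolling the first coordinate gives the cylinder $\Z \times \Z/N\Z$ with encoding strings $(x, \tilde y)$, and the covering map $\Z\times\Z/N\Z \to \Z/M\Z\times\Z/N\Z$ identifies cylindrical hitomezashi loops of homology $\pm 1$ with toroidal loops of homology $(0,\pm 1)$: a toroidal loop of homology $(0,\pm1)$ has $\Delta x = 0$ and so lifts to a bounded, hence closed, orbit on the cylinder, while conversely any cylindrical loop has $\Delta x = 0$ and $\Delta y = \pm N$ and therefore projects to a primitive—hence simple—toroidal loop of homology $(0,\pm1)$. Relabeling coordinates so that the infinite factor is first, the same discussion for the cylinder $\Z\times \Z/M\Z$ with strings $(y,\tilde x)$ matches cylindrical loops of homology $\pm 1$ with toroidal loops of homology $(\pm1, 0)$.

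Next I would carry out the overflow computation. Because $\Sigma(y)=0$, the supremum of $\Sigma_m^M(\tilde y)$ over nonempty windows equals $\Sigma_{max}(y) = r(y)$ and the infimum equals $-r(y)$; hence a positively (equivalently, negatively) overflowing window of $\tilde y$ exists if and only if $r(y) > r(x)$. Passing to a minimal such window and invoking both directions of Theorem~\ref{thm:cylindergenerate}, this is in turn equivalent to the existence of a cylindrical loop of homology $+1$ (resp. $-1$) on $\Z\times\Z/N\Z$. Combined with the first paragraph, toroidal loops of homology $(0,\pm1)$ exist iff $r(y)>r(x)$, and symmetrically toroidal loops of homology $(\pm1,0)$ exist iff $r(x)>r(y)$. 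This already yields the existence statements in the cases $r(x)<r(y)$ and $r(x)>r(y)$; moreover, by Observation~\ref{obs:possiblehomology}, the presence of a $(0,1)$ loop forces the common primitive homology direction to be $(0,1)$, so in the case $r(x)<r(y)$ every nontrivial loop is of class $(0,\pm1)$, and symmetrically when $r(x)>r(y)$.

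It remains to treat $r(x)=r(y)$. The equivalences above immediately rule out loops of pure homology $(0,\pm1)$ or $(\pm1,0)$, since neither strict inequality holds. The crux, and the step I expect to be the main obstacle, is to exclude loops of mixed homology $(u,v)$ with $u,v\neq 0$; these are invisible to either unrolling, as they lift to bi-infinite paths rather than loops on each cylinder. For this I would prove the following strengthening of the necessity direction of Theorem~\ref{thm:cylindergenerate}: any nontrivial toroidal loop with $\Delta y \neq 0$ forces $r(y) > r(x)$, and symmetrically $\Delta x\neq 0$ forces $r(x)>r(y)$. Granting this, a mixed loop would force both $r(y)>r(x)$ and $r(x)>r(y)$, which is impossible; thus no mixed loops exist, and in the equality case no nontrivial loops exist at all. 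The subcase $\Delta x = 0$ of this strengthening is exactly the forward direction of Theorem~\ref{thm:cylindergenerate} applied to the associated cylindrical loop. The genuinely new input is the case $\Delta x\neq 0$: here I would lift the loop to a bi-infinite hitomezashi path on $\Z\times\Z/N\Z$ that winds $\Delta y/N \neq 0$ times around the $\Z/N\Z$-direction, and use the height-function machinery of Section~\ref{sec:height} (the same accumulation mechanism that drives Theorem~\ref{thm:cylindergenerate}) to extract, from one such vertical winding, a window of $\tilde y$ whose partial sum exceeds $r(x)$ in absolute value, giving $r(y)>r(x)$. Making this extraction rigorous for a winding path, rather than for a loop, is the principal difficulty.
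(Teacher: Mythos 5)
Your handling of the two strict-inequality cases is correct and is essentially the paper's own argument: the paper likewise lifts to the two cylinders (packaged there as Corollary \ref{cor:toroidal}), notes that since $\Sigma(y)=0$ an overflowing window of $\tilde{y}$ exists precisely when $r(y)>r(x)$, and invokes Theorem \ref{thm:cylindergenerate} to produce the loops of homology $(0,\pm 1)$ (and symmetrically $(\pm 1,0)$).

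The genuine gap is in the case $r(x)=r(y)$, and it is exactly the step you flag as the principal difficulty. Your plan needs the statement that a toroidal loop of mixed homology $(u,v)$ with $u,v\neq 0$ forces $r(y)>r(x)$ (and symmetrically $r(x)>r(y)$), but such a loop lifts to a bi-infinite winding path on each cylinder, not to a cylindrical loop, so neither direction of Theorem \ref{thm:cylindergenerate} applies; you only sketch a hoped-for height-function extraction for winding paths and acknowledge you cannot yet make it rigorous. As written, the equality case is therefore unproven. The paper avoids this entirely: it quotes Ren and Zhang's classification theorem (the table at the start of Section \ref{subsec:torclass}), which states that when $\Sigma(x)=\Sigma(y)=0$ every nontrivial toroidal loop already has homology $(\pm 1,0)$ or $(0,\pm 1)$ --- mixed classes simply do not occur. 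Given that prior result, your (correct) equivalences ``loops of class $(0,\pm 1)$ exist iff $r(y)>r(x)$'' and ``loops of class $(\pm 1,0)$ exist iff $r(x)>r(y)$'' immediately finish the equality case. So the fix is to cite that classification rather than to prove your proposed strengthening, which the paper never establishes and which is not obviously within reach of its tools.
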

	\section{Height}\label{sec:height}

    
	
    We first describe the bijection for planar hitomezashi loops. To do this, we define excursions.
        \begin{definition}
		\label{defn:excursion}
		Define a sequence  $x\in \lbrace-1, 1\rbrace^N$ to be an up-excursion (resp. down-excursion) if $\Sigma_1^N(x) = 0$ and $\Sigma_1^k(x)$ is positive (resp. negative) for all $1\leq k\leq N-1$.\nl
		We define the \textbf{height} of an up-excursion to be the maximum value of $\Sigma_1^k(x)$. The \textbf{height} of a down-excursion is the negative of the minimum value of $\Sigma_1^k(x)$.
	\end{definition}
        Removing the first and last steps of an excursion $x$ gives a bijection between up- (resp. down-) excursions of length $N$ and height $h$, and Dyck paths of semilength $\tfrac{N}{2}-1$ and height $h-1$.
	\begin{theorem}[Pete~\cite{Pete}]
		For every hitomezashi loop in $\Cloth_{\Z\times \Z}(x,y)$, the smallest enclosing rectangle $[a,b]\times [c,d]$ has the property that $x_c, \ldots, x_d$, and $y_a, \ldots, y_b$ are opposite excursions of the same height; that is, one is an up-excursion and the other is a down-excursion. Conversely any rectangle $[a,b]\times [c,d]$ with such a property has a unique hitomezashi loop spanning the rectangle.
	\end{theorem}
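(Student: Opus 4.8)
The plan is to extract from a loop two lattice-path statistics and show they are exactly the excursions in question, and conversely to rebuild the loop from a pair of excursions by running a coupled recursion. Throughout I will use the two defining features of an oriented pattern: every horizontal edge in row $j$ is oriented in the direction $x_j$ and every vertical edge in column $i$ in the direction $y_i$, while a hitomezashi loop alternates horizontal and vertical edges, so it turns at every vertex and every straight run has length one.

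For the forward direction, fix a loop $L$ with smallest enclosing rectangle $[a,b]\times[c,d]$ and orient its traversal so that the enclosed region lies to its left. The central structural claim, which I expect to be the main obstacle, is that $L$ is monotone in both coordinates: each interior vertical line $x=i+\tfrac12$ (for $a\le i\le b-1$) and each interior horizontal line meets $L$ in exactly two points, so the region bounded by $L$ is convex in both the horizontal and vertical directions. I would prove this by a rigidity argument: writing the even-step vertices of the traversal as $(\alpha_t,\beta_t)$ with $\alpha_{t+1}=\alpha_t+x_{\beta_t}$ and $\beta_{t+1}=\beta_t+y_{\alpha_{t+1}}$, a failure of monotonicity forces the walk to revisit a row while demanding two opposite horizontal directions there (or a column demanding two opposite vertical directions), contradicting the fact that $x_j$ and $y_i$ are single-valued. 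Granting monotonicity, each interior column contains exactly two vertical edges of $L$ and each extreme column exactly one.

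Given this, I would read off the excursion by counting cells. Let $A_i$ be the number of unit cells of the enclosed region in the strip $i\le x\le i+1$. Because all vertical edges in column $i$ point in the direction $y_i$, comparing the strips on the two sides of column $i$ yields $A_i-A_{i-1}=-n_i y_i$, where $n_i$ is the number of vertical edges in column $i$; with $n_i=2$ in the interior and $n_a=1$ this telescopes to $A_i=-2\,\Sigma_a^i(y)-1$. Since the region is connected and spans every column, $A_i\ge 1$ for $a\le i\le b-1$ while $A_b=0$, which is precisely the statement that $\Sigma_a^i(y)<0$ for $a\le i\le b-1$ and $\Sigma_a^b(y)=0$, i.e. that $y_a,\dots,y_b$ is a down-excursion in the sense of Definition \ref{defn:excursion}; the symmetric count over rows shows $x_c,\dots,x_d$ is an up-excursion. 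That one is an up- and the other a down-excursion is forced by the chosen handedness, since the unique bottom and left extreme edges must be oriented rightward and downward, so $x_c=+1$ and $y_a=-1$.

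Finally, equality of heights is where the two sequences must interact, and I would package it with the converse. The height of the $x$-excursion equals $\big(\max_j A'_j+1\big)/2$ and that of the $y$-excursion equals $\big(\max_i A_i+1\big)/2$, where $A'_j$ is the width of row $j$, so equal height is equivalent to ``widest row $=$ tallest column.'' The cleanest route is constructive: given an up-excursion $x_c\dots x_d$ and a down-excursion $y_a\dots y_b$, iterate the recursion above starting from the bottom-left extreme. The excursion inequalities keep the trajectory inside $[a,b]\times[c,d]$ and drive it to all four sides, while the fact that an excursion's partial sums never return to $0$ early prevents the orbit from closing prematurely; the orbit closes into a single simple loop exactly when the two heights agree, which simultaneously gives existence, uniqueness, and the height-matching needed in the forward direction. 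The difficulty throughout is concentrated in the monotonicity lemma and in verifying that the recursion closes up precisely under the height condition.
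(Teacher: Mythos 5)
Your argument has a fatal gap at its declared ``main obstacle'': the monotonicity claim is false, and in fact it contradicts the very theorem you are trying to prove. A hitomezashi loop alternates horizontal and vertical edges, so it contains equally many of each. If every interior vertical line $x=i+\tfrac12$ (there are $b-a$ of them) met $L$ in exactly two points, $L$ would have exactly $2(b-a)$ horizontal edges, and symmetrically exactly $2(d-c)$ vertical edges, forcing $b-a=d-c$: every enclosing rectangle would be a square. But the converse half of Pete's theorem produces a loop spanning $[a,b]\times[c,d]$ whenever $x_c,\dots,x_d$ and $y_a,\dots,y_b$ are opposite excursions of equal \emph{height}, and equal-height excursions need not have equal \emph{length}: $(+1,+1,-1,-1)$ and $(-1,-1,+1,-1,+1,+1)$ are an up- and a down-excursion, both of height $2$, of lengths $4$ and $6$, so loops with non-square enclosing rectangles exist. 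One can also see the failure directly from the height function of Section \ref{sec:height}: all edges of a loop have the same height, and a vertical edge of $L$ in column $i$ at row interval $[j,j+1]$ has height $\Sigma_0^j(x)-\Sigma_0^{i-1}(y)-\tfrac12 y_i$, so such an edge can occur at \emph{every} $j$ where the partial sum $\Sigma_0^j(x)$ hits one fixed value; partial sums of a $\pm1$ sequence revisit values, so a single column generally carries many vertical edges of $L$, and large hitomezashi loops are highly non-convex. Your ``rigidity'' sketch cannot rescue this: revisiting a row creates no contradiction, because all horizontal edges of row $j$ are oriented the same way ($x_j$ is single-valued) and the loop simply uses several of them.

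Since the cell-count formula $A_i=-2\Sigma_a^i(y)-1$, the identification of the heights with $(\max_i A_i+1)/2$, and the derivation of the excursion property all rest on this convexity, the forward direction collapses; and the converse remains unproven in any case, because the assertion that the coupled recursion ``closes into a single simple loop exactly when the two heights agree'' \emph{is} the hard content of the theorem and is given no argument. For comparison: the paper does not reprove this statement (it cites Pete), but the mechanism it describes, and reuses for its own cylindrical analogues in Propositions \ref{prop:exists} and \ref{prop:satisfies}, is the height function itself --- one shows every edge of a loop has the same height and then computes, purely from partial sums of $x$ and $y$, the heights of candidate edges crossing the sides of the rectangle, with no convexity input whatsoever. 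That is the route you should take; any approach predicated on row- or column-convexity of the loop cannot work.
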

    
	Pete establishes an equivalent bijection for unoriented hitomezashi patterns by analyzing a \emph{height function} on edges and square regions;  see \cite[Section 3]{Pete}. 
	
        Pete's height function on the unoriented hitomezashi pattern involves an alternating sum. For oriented hitomezashi patterns, the height function is simpler. Since excursions will not appear in the remainder of the paper, all subsequent mentions of \emph{height} will refer to this height function.
	\begin{definition}
		\label{defn:region}
		Let $x, y \in \lbrace -1, 1\rbrace^\Z$. Define a \textbf{square region} to be a region of form $[i, i+1] \times [j, j+1]$ for $i, j \in \Z$. (For brevity, we will always refer to square regions by their center, and sometimes refer to them as simply ``regions".)
        \end{definition}
        \begin{definition}
        \label{defn:heightfunction}
        The \textbf{height} of a square region on $\Cloth_{\Z\times \Z}(x,y)$ with center $(i+\frac{1}{2}, j+\frac{1}{2})$ is $\Sigma_0^j(x)-\Sigma_0^i(y)$. We further define the \textbf{height} of any edge to be the average of the heights of the two regions it borders - that is, the height of an edge with vertices $(i, j), (i + 1, j)$ has the average height of the regions of center $(i + \frac{1}{2}, j + \frac{1}{2})$ and $(i + \frac{1}{2}, j - \frac{1}{2})$, and similarly, the height of an edge with vertices $(i, j), (i, j + 1)$ has the average height of the regions of center $(i - \frac{1}{2}, j + \frac{1}{2})$ and $(i + \frac{1}{2}, j + \frac{1}{2})$.
	\end{definition}

\begin{figure}[ht]
    \centering
    \includegraphics[height=8cm]{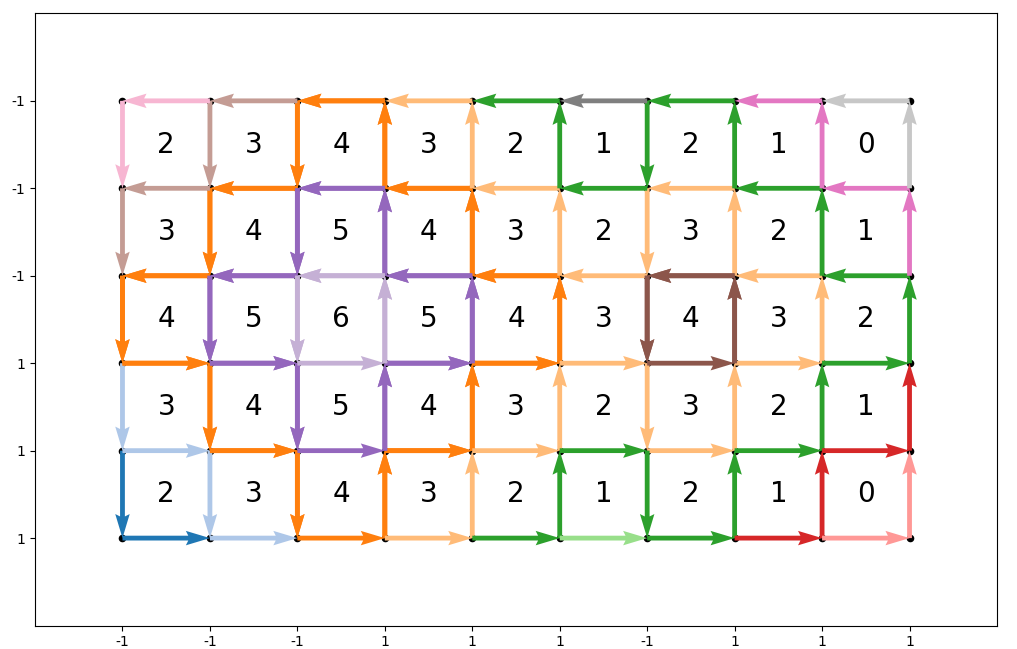}
\caption{A section of a planar hitomezashi pattern with
$x = \{ \dots, 1,1,1,-1,-1,-1, \dots \}$, 
$y = \{ \dots, -1,-1,-1,1,1,1,-1,1,1,1, \dots \}$, with the origin in the bottom left corner. Each square region displays its height.}\label{fig3}
\end{figure}

    When traversing an edge, the height of the region to the left will always be one larger than that of the region to the right. For instance an edge of the form  $(i, j) \rightarrow (i + 1, j)$ implies $x_j = 1$, so the region with center $(i + \frac{1}{2}, j + \frac{1}{2})$ must have height one larger than the region with center $(i + \frac{1}{2}, j - \frac{1}{2})$. Analogous arguments hold for all possible directions of edges.

    Now observe that the regions in Figure \ref{fig3} to the left of the bright orange loop have height $4$, one more than the height of the regions to the right. This is not a coincidence: Take for example the edge from $(3, 1) \rightarrow (4, 1)$. Since the region below has height $3$ the edge implies that the region above must have height $4$. The next edge in the path, $(4, 1) \rightarrow (4, 2)$ implies that the region to its right must have height $3$. Since hitomezashi paths alternate between horizontal and vertical edges, we can always continue in this fashion and conclude that all regions to the left (resp. right) have the same height. Thus all edges on a hitomezashi path have the same height. We will occasionally refer to the height of a hitomezashi path or loop, which is the height of any one of its edges.
    
	Finally, we also define height on  cylindrical and toroidal hitomezashi patterns by lifting to the plane, whenever such a definition would be well-defined.
	\begin{definition}
		\label{defn:heightfunction}
		Let $x\in \lbrace -1, 1\rbrace^{\Z/N\Z}$ and $y\in \lbrace -1, 1\rbrace^\Z$, and suppose $\Sigma(x) = 0$. Then the height of the square region on $\Cloth_{\Z\times \Z/N\Z}(x,y)$ with center $(i+\frac{1}{2}, j+\frac{1}{2})$ is simply the height of the square region on $\Cloth_{\Z\times \Z}(\tilde{x},y)$ with center $(i+CN+\frac{1}{2}, j+\frac{1}{2})$, for an arbitrary integer $C$.
	\end{definition}
        Notice that any choice of $C$ will produce the same height function, as the height of the region with center $(i+C_2N+\frac{1}{2}, j+\frac{1}{2})$ minus the height of the region with center $(i+C_1N+\frac{1}{2}, j+\frac{1}{2})$ is $\Sigma_{i+C_1N+1}^{i+C_2N}(x) = (C_2-C_1)\Sigma(x) = 0$.
        
	We define height for toroidal patterns whenever $\Sigma(x) = \Sigma(y) = 0$ in the same fashion.
	\section{Nontrivial cylindrical hitomezashi loops}\label{sec:cylinderresults}
	\subsection{Characterization when $\Sigma(x) = 0$}
	Recall from Observation \ref{obs:possiblehomology} that the only possible nontrivial homology classes of hitomezashi loops are $\pm 1$. The goal of this section is to characterize such hitomezashi loops by proving Theorem \ref{thm:cylindergenerate}, which we do by first proving Proposition \ref{prop:exists} and Proposition \ref{prop:satisfies}, which correspond to the forwards and backwards directions of Theorem \ref{thm:cylindergenerate}.
	We will only give proofs for minimally positively overflowing subsequences and loops with homology class $1$. The argument is identical for minimally negatively overflowing subsequences and loops with homology class $-1$.
	\begin{proposition}
		\label{prop:exists}
		Suppose $\Cloth_{\Z\times \Z/N\Z}(x,y)$ is a cylindrical hitomezashi pattern with $\Sigma(x) = 0$, and $m < M$ are integers such that $y_m, y_{m+1},\ldots, y_M$ is positively overflowing (resp. negatively overflowing).\nl
		Then there exists a hitomezashi loop with homology class $1$ (resp. $-1$) in the region $\ssthat{(x, y)}{m \leq x \leq M}$.
	\end{proposition}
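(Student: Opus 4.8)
The plan is to realize $L$ as an essential component of a level set boundary of the height function, reading off the level from the overflow hypothesis. First I would record two facts about heights. Writing the height of the region centered at $(i+\tfrac12,j+\tfrac12)$ as $\Sigma_0^j(x)-\Sigma_0^i(y)$ and setting $p=\max_j \Sigma_0^j(x)$ and $q=\min_j \Sigma_0^j(x)$, the periodicity of $\Sigma_0^j(x)$ (valid since $\Sigma(x)=0$) together with the definition of $\Sigma_{max}$ gives $p-q=\Sigma_{max}(x)=r(x)$; hence the heights occurring in column $i$ (the regions centered at $x=i+\tfrac12$) range exactly over the integer interval $[\,q-\Sigma_0^i(y),\,p-\Sigma_0^i(y)\,]$ of width $r(x)$. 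Second, for a fixed half-integer level $h-\tfrac12$ I would show that the set $\partial A_h$ of edges bordering a region of height $\ge h$ and a region of height $\le h-1$ is a disjoint union of oriented hitomezashi loops and bi-infinite paths. This is a local check at each vertex $(i,j)$: the four surrounding regions have heights $a_0,\ a_0-y_i,\ a_0+x_j,\ a_0+x_j-y_i$ for $a_0=\Sigma_0^{j-1}(x)-\Sigma_0^{i-1}(y)$, and using $x_j,y_i\in\{\pm1\}$ one rules out both ``two collinear edges'' patterns and the all-four pattern, leaving $0$ or $2$ incident edges, the latter always one horizontal and one vertical meeting in a corner. Traversing $\partial A_h$ with the height-$\ge h$ side on the left then follows edge orientations (higher region always on the left) and alternates directions, so each component is an oriented hitomezashi loop or path.

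Next I would choose the level. Positive overflow gives $\Sigma_m^M(y)=\Sigma_0^M(y)-\Sigma_0^{m-1}(y)>r(x)$. Let $a$ be a smallest minimizer of $\Sigma_0^i(y)$ over $i\in[m-1,M]$ and let $b$ be a maximizer over $i\in[a,M]$; then $a<b$ and $\Sigma_0^b(y)-\Sigma_0^a(y)\ge \Sigma_0^M(y)-\Sigma_0^{m-1}(y)>r(x)$, so this difference is at least $r(x)+1$. Set $h=q-\Sigma_0^a(y)$. By the first paragraph, column $a$ has minimum height exactly $h$, hence lies entirely in $A_h$, while column $b$ has maximum height $p-\Sigma_0^b(y)\le q-\Sigma_0^a(y)-1=h-1$, hence lies entirely in the complement.

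Finally I would extract the loop inside the finite sub-cylinder $\mathcal S=\ssthat{(x,y)}{a\le x\le b+1}$, whose regions are columns $a$ through $b$. Let $U$ be the connected component of $\{\text{regions of height}\ge h\}\cap\mathcal S$ containing column $a$. Since column $a$ is entirely high it is an essential cycle, so $U$ contains an essential curve, while the complement of $U$ in $\mathcal S$ contains the essential cycle given by column $b$. As $\mathcal S$ is an annulus whose boundary circles $\{x=a\}$ and $\{x=b+1\}$ are met by $U$ and by column $b$ respectively, some component $L$ of $\partial U$ separates the two boundary circles and is therefore essential; it uses only the interior separating edges at $x\in\{a+1,\dots,b\}\subseteq\{m,\dots,M\}$, and by the local analysis it is a single oriented hitomezashi loop lying in $\ssthat{(x,y)}{m\le x\le M}$. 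Because its high side $U$ lies toward the smaller $x$-coordinate, traversing $L$ with the high side on the left winds once in the positive $j$-direction, so $\Delta y=N$ and its homology class is $1$.

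The main obstacle is confinement: a priori the boundary of $A_h$ in the whole cylinder can wander arbitrarily far in $x$, so one must produce an essential component staying inside columns $[m,M]$. My plan resolves this by cutting down to the strip $\mathcal S$ bounded by the fully-high column $a$ and the fully-low column $b$, which simultaneously caps the horizontal extent and forces an essential separating loop. The remaining work, the annulus separation step and the verification of the homology sign, should be routine once the strip is in place.
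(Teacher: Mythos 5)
Your plan is correct, but it takes a genuinely different route from the paper's proof. The paper normalizes first (reduces to a minimally overflowing sequence and cyclically shifts $x$ so that all partial sums $\Sigma_0^k(x)$ are nonnegative), then follows the specific hitomezashi loop through the forced edge $(m,-1)\to(m,0)$: using the fact that all edges of a hitomezashi path share one height, explicit height computations show this loop can contain no edge $(m,j)\to(m-1,j)$, no edge $(M,j)\to(M+1,j)$, and no downward edge $(i,0)\to(i,-1)$, which simultaneously confines it to the strip (so it closes up into a loop) and forces $\Delta y>0$ (so its class is $1$). You instead work globally with superlevel sets of the same height function: you extract a fully high column $a$ and a fully low column $b$ inside the strip (your inequality $\Sigma_0^b(y)-\Sigma_0^a(y)\ge r(x)+1$ is right, and $p-q=r(x)$ does hold since $\Sigma(x)=0$ makes $\Sigma_0^j(x)$ periodic), and then produce the loop as an essential frontier component of the high component $U$ in the finite annulus, reading the sign of the homology class from the ``higher region on the left'' rule. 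Your approach avoids the paper's normalizations and even localizes the loop to the narrower strip $\lbrace a+1\le x\le b\rbrace$; the paper's approach buys explicitness, pinning the loop to a concrete edge and height, which dovetails with the bounding-cylinder and uniqueness analysis in Theorem \ref{thm:cylindergenerate}. Two points in your plan need care when written out: (i) your local $0$-or-$2$ vertex count is stated for the full level set $\partial A_h$, but you apply it to $\partial U\subseteq\partial A_h$; a priori a component of $\partial U$ could be a proper subset of a level component, so you should either redo the count for $\partial U$-edges (it works: the isolated corner region determines both incident level edges to be in or out of $\partial U$ together) or note that the high side of a level component is edge-connected through the adjacent height-$(h+1)$ squares, so each level component lies entirely in or entirely out of $\partial U$; and (ii) the annulus separation step is the real topological content and should be proved, e.g.\ by noting that the frontier components of $U$ together with the circle $\lbrace x=a\rbrace$ bound $U$, hence sum to $0$ in $H_1(\mathcal{S})$, forcing some interior frontier component to be essential. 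Neither point is a gap in the idea; both are fillable.
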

	\begin{proof}
		It suffices to prove Proposition \ref{prop:exists} when $y_m, y_{m+1},\ldots, y_M$ is minimally positively overflowing. Note that the minimality implies $y_m = y_M = 1$, since otherwise removing the start or end would result in a smaller positively overflowing sequence. Since  $\Sigma(x) = 0$, we may assume without loss of generality that $\Sigma_0^k(x) > 0$ is nonnegative for all $0\leq k \leq N-1$, because cyclically permuting $x$ does not change $\Cloth_{\Z\times \Z/N\Z}(x,y)$ (up to graph isomorphism).

		Since $y_m = 1$ we must have the directed edge $(m, -1)\rightarrow (m,0)$. Let $L$ be the hitomezashi loop or infinite hitomezashi path containing the edge $(m, -1)\rightarrow (m,0)$.
        We will show that $L$ cannot contain any edges of the form $(m, j) \rightarrow (m - 1, j)$ or $(M, j) \rightarrow (M + 1, j)$, confining it to the region $\ssthat{(x, y)}{m \leq x \leq M}$. Since there are a finite number of edges in this region, this implies $L$ is indeed a hitomezashi loop.
        
        We will then show that $L$ cannot contain any edges of the form $(i, 0)\rightarrow (i, -1)$. This implies that $\Delta y > 0$, which then implies that $L$ must have homology class $1$. To see this, let us traverse $L$ starting at $(m, -1)\rightarrow (m,0)$, tracking the number of upward edges minus the number of downward edges. If at any point in our traverse $\Delta y = 0$, then the last edge must have been of the form $(i, 0) \rightarrow (i, -1)$. Thus there are always more upward than downward edges when traversing the loop, and so $\Delta y > 0$. Hence the homology class $\Delta y/N$ of $L$ is also positive, and must be $1$.

        \begin{figure}[ht]
			\centering
            \includegraphics[height = 10cm]{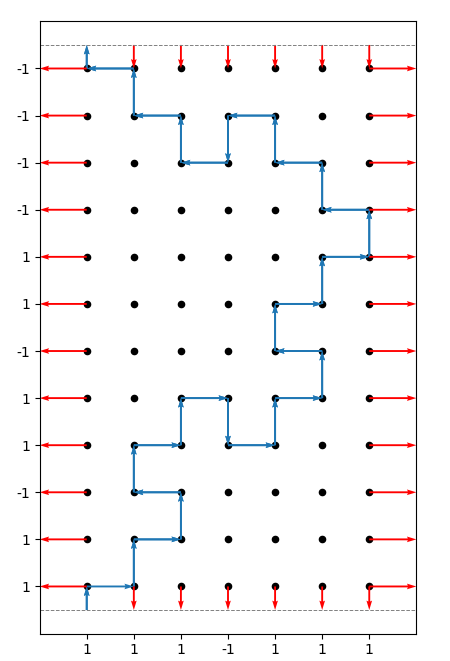}            
			\caption{An illustration of the existence argument. Intuitively, the hitomezashi loop $L$ (in blue) is contained by the horizontal forbidden edges and can only rejoin its edge $(-1, 0) \rightarrow (0, 0)$ by going up the cylinder due to the vertical forbidden edges, as indicated in red.}\label{FigHito2}
		\end{figure}
        
        It remains to show that such edges cannot be in $L$. We demonstrate this by computing the heights of such edges, and showing that they cannot be the same as the height of $L$.
        
        Note that $(m, -1)\rightarrow (m,0)$ borders regions with height $\Sigma_{0}^{-1} (x) - \Sigma_0^{m-1}(y)$ and $\Sigma_{0}^{-1} (x) - \Sigma_0^{m}(y)$. Thus, every edge in $L$ has height $-\frac{1}{2}y_m - \Sigma_0^{m-1}(y) = -\frac{1}{2} - \Sigma_0^{m-1}(y)$.
		
         Consider an edge $(m, j) \rightarrow (m-1, j)$. Such an edge implies $x_j = -1$. This edge borders regions with height $\Sigma_0^{j-1}(x) - \Sigma_0^{m-1}(y)$ and $\Sigma_0^{j}(x) - \Sigma_0^{m-1}(y)$. Thus $(m-1, j) \rightarrow (m, j)$ has height \[\frac{1}{2}x_j+\Sigma_0^{j-1}(x)-\Sigma_0^{m-1}(y) = - \frac{1}{2}+\Sigma_0^{j-1}(x)-\Sigma_0^{m-1}(y) =  \frac{1}{2}+\Sigma_0^{j}(x)-\Sigma_0^{m-1}(y)\] since $(m, j) \rightarrow (m-1, j)$ implies $x_j = -1$. Since $\frac{1}{2}+\Sigma_0^{j}(x)-\Sigma_0^{m-1}(y) \geq \frac{1}{2} -\Sigma_0^{m-1}(y) > - \frac{1}{2} -\Sigma_0^{m-1}(y)$, $L$ cannot contain any edge of the form $(m, j) \rightarrow (m-1, j)$.
		
		Now consider an edge $(M, j) \rightarrow (M+1, j)$. Such an edge implies $x_j = 1$. This edge borders regions with height $\Sigma_0^{j-1}(x) - \Sigma_0^{M}(y)$ and $\Sigma_0^{j}(x) - \Sigma_0^{M}(y)$. Hence $(M, j) \rightarrow (M+1, j)$ has height \[-\frac{1}{2}x_j + \Sigma_0^{j}(x) - \Sigma_0^{M}(y) = -\frac{1}{2} + \Sigma_0^{j}(x) - \Sigma_0^{M}(y)\] As $\Sigma_m^{M}(y) = r(x) + 1$, the height of $(M, j) \rightarrow (M+1, j)$ is $-\frac{1}{2} + \Sigma_0^{j}(x) - \Sigma_0^{M}(y) = -\frac{3}{2} + \Sigma_0^{j}(x) - r(x) - \Sigma_0^{m-1}(y)$. Since $r(x) \geq \Sigma_0^{j}(x)$, $-\frac{3}{2} + \Sigma_0^{j}(x) - r(x) - \Sigma_0^{m-1}(y) \leq -\frac{3}{2}  - \Sigma_0^{m-1}(y) <  -\frac{1}{2}  - \Sigma_0^{m-1}(y)$, and thus $L$ cannot contain any edge $(M, j) \rightarrow (M + 1, j)$.
		
		Finally, consider the edge $(i, 0)\rightarrow (i, -1)$ for $m \leq i \leq M$. Such an edge implies $y_i = -1$. It borders regions with height $\Sigma_0^{-1}(x)-\Sigma_0^i(y)$ and $\Sigma_0^{-1}(x)-\Sigma_0^{i-1}(y)$. Thus, $(i, 0)\rightarrow (i, -1)$ has height 
        \[\frac{1}{2}y_i - \Sigma_0^{i}(y) =\frac{1}{2}y_i - \Sigma_0^{m-1}(y) -\Sigma_m^i(y) = -\frac{1}{2} - \Sigma_0^{m-1}(y) -\Sigma_m^i(y)\]
        Since $\Sigma_m^i(y) > 0$, $-\frac{1}{2} - \Sigma_0^{m-1}(y) -\Sigma_m^i(y) < -\frac{1}{2} - \Sigma_0^{m-1}(y)$, so no such edge is contained in $L$.

        This completes the proof, and therefore $L$ is a hitomezashi loop of homology $1$.
	\end{proof}
	
	\begin{proposition}
		\label{prop:satisfies}
    		Suppose $\Cloth_{\Z\times \Z/N\Z}(x,y)$ is a cylindrical hitomezashi pattern with $\Sigma(x) = 0$, and let $L$ be a hitomezashi loop in $\Cloth_{\Z\times \Z/N\Z}(x,y)$ with homology class $1$ (resp. $-1$). Then the bounding cylinder $\ssthat{(x, y)}{m \leq x \leq M}$ has the property that there exist $m\leq m' < M' \leq M$ such that $y_{m'}, y_{m'+1}, \ldots, y_{M'}$ is minimally positively overflowing (resp. minimally negatively overflowing).
	\end{proposition}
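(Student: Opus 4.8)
The plan is to run the height computation of Proposition~\ref{prop:exists} in reverse. Write $P_x(j)=\Sigma_0^j(x)$ and $P_y(i)=\Sigma_0^i(y)$, so that the region centered at $(i+\tfrac12,j+\tfrac12)$ has height $P_x(j)-P_y(i)$; since $\Sigma(x)=0$ the periodic walk $P_x$ satisfies $r(x)=\max_j P_x(j)-\min_j P_x(j)$. As in Section~\ref{sec:height}, $L$ has a constant height $h$, and as one traverses $L$ the bordering region on the left has height $h+\tfrac12$ while the one on the right has $h-\tfrac12$. Because $L$ is oriented with homology $+1$ it winds upward once around the cylinder: in the planar lift it is a bi-infinite, $N$-periodic, strictly upward-winding simple path confined to the columns $[m,M]$, and the explicit edge computed in Proposition~\ref{prop:exists} identifies the side of height $h+\tfrac12$ with the $-x$ side (the side of column $m-1$).

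Since $L$ has homology $1$, it crosses every horizontal level with net winding $+1$, so across each level it has at least one upward edge. Let $\nu=\max_j P_x(j)$ and $\mu=\min_j P_x(j)$, so $\nu-\mu=r(x)$. Picking a row $j$ with $P_x(j)=\nu$, the loop has an upward edge $(c,j)\to(c,j+1)$ whose right-hand region, at column $c$, has height $h-\tfrac12=\nu-P_y(c)$, so $P_y(c)=\nu-h+\tfrac12$. Picking a row $j'$ with $P_x(j')=\mu$, the loop has an upward edge $(c',j')\to(c',j'+1)$ whose left-hand region, at column $c'-1$, has height $h+\tfrac12=\mu-P_y(c'-1)$, so $P_y(c'-1)=\mu-h-\tfrac12$. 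Here $c,c'\in[m,M]$ are columns of $L$, and subtracting gives $P_y(c)-P_y(c'-1)=r(x)+1$.

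If $c'\le c$ then $y_{c'},\dots,y_{c}$ lies inside $[m,M]$ and $\Sigma_{c'}^{c}(y)=P_y(c)-P_y(c'-1)=r(x)+1>r(x)$, so this subsequence is positively overflowing (and $c'<c$ since $r(x)+1\ge 2$). Establishing the ordering $c'\le c$ is the main obstacle, and it is exactly where the sign of the homology class enters: upward winding should force the maximal-level crossing to sit weakly to the right of the minimal-level crossing. I would prove it by following the strand of $L$ upward from a minimal-level crossing until it first reaches a maximal level, tracking its horizontal position and using that $L$ is a simple curve confined to $[m,M]$ (so it cannot escape left of column $m$). Conceptually, at a minimal level the loop makes a leftward notch and leaves heading up and to the right, while at a maximal level it makes a rightward notch entered from the upper left; were the order reversed, the strand would be forced to close up into a bounded, hence homology-$0$, planar loop rather than winding around — the cylindrical analogue of the excursion-versus-overflow dichotomy underlying Pete's bijection. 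This is what rules out the reversed order for a genuinely homology-$1$ loop.

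Finally, any positively overflowing subsequence contains a minimally positively overflowing one: among all positively overflowing subintervals of $[c',c]$ choose one of minimal length; every strictly interior subinterval is then shorter, hence not positively overflowing, which is precisely the condition in Definition~\ref{defn:overflowing}. This yields indices $m\le m'<M'\le M$ with $y_{m'},\dots,y_{M'}$ minimally positively overflowing, as required. The homology $-1$ case is identical after interchanging $\nu$ with $\mu$ and the left and right regions, producing a minimally negatively overflowing subsequence.
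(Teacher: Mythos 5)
Your proof follows the same route as the paper's: the paper likewise uses constancy of the height function along $L$, picks upward edges of $L$ at the two levels where the partial sums of $x$ achieve their extremes (after cyclically normalizing so that $\Sigma_0^b(x)=r(x)$), equates the two height expressions to obtain a subsequence of $y$ with sum exactly $r(x)+1$, and then extracts a minimal positively overflowing subsequence inside it. Your height bookkeeping and the final minimality extraction are both correct.

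The step you explicitly leave open --- the ordering $c'\le c$ --- is a genuine gap in your writeup: the strand-following sketch (``were the order reversed, the strand would be forced to close up into a bounded, hence homology-$0$, planar loop'') is a heuristic, not an argument, and it is not clear it can be completed as stated, since a homology-$1$ loop may cross each level many times and the columns of its upward crossings at the two extremal levels can a priori interleave. You should know that the paper is no more careful here: it asserts $m\le m'<M'\le M$ ``since these are the $x$-coordinates of vertices in $L$,'' which justifies only the outer inequalities, so you have correctly isolated a real subtlety. The gap can be closed with tools already available at this point in the paper. Suppose $c<c'$. Your identity $P_y(c)-P_y(c'-1)=r(x)+1$ then gives
\[
\Sigma_{c+1}^{c'-1}(y)=P_y(c'-1)-P_y(c)=-(r(x)+1)<-r(x),
\]
and this interval is nonempty because its sum is nonzero, so $y_{c+1},\dots,y_{c'-1}$ is negatively overflowing. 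By Proposition~\ref{prop:exists} there is then a hitomezashi loop $L''$ of homology class $-1$ contained in $\ssthat{(x, y)}{c+1 \leq x \leq c'-1}$. Since $L''$ has nonzero homology it separates the cylinder into two components, one containing the vertical edge of $L$ at column $c$ and the other containing the vertical edge of $L$ at column $c'$. But $L\neq L''$ (their homology classes differ), $L$ is connected, and distinct loops have no transverse intersections by Observation~\ref{obs:possiblehomology}, a contradiction; this is precisely the separation argument used in the proof of Theorem~\ref{thm:cylindergenerate}. Hence $c'\le c$ (indeed $c'<c$), and the rest of your argument goes through.
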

	\begin{proof}
		It suffices to find a positively overflowing subsequence, as then there must exist a minimally positively overflowing subsequence within it. Since $r(x) = \Sigma_{max}(x)$ there exist $a, b$ such that $\Sigma_a^b (x) = r(x)$. Since $\Sigma_{a+CN}^b(x) = \Sigma_a^b(x) - C\Sigma(x) = \Sigma_a^b(x)$, we can choose $0 \leq b - a < N$. Without loss of generality, we may cyclically permute $x$ so that $0 = a \leq b < N$. Since $L$ has homology class $1$, it must contain an upward edge at every $y$–coordinate. In other words, for each $j \in \mathbb{Z}/N\mathbb{Z}$ there exists some $c_j$ with $(c_j, j)\to(c_j, j+1)$ belonging to $L$. Let $m' = c_{-1}$ and $M' = c_b$. Note that since these are the $x$-coordinates of vertices in $L$, $m\leq m' < M' \leq M$, where $\lbrace m \leq x \leq M \rbrace$ is the bounding cylinder of $L$.

		As $L$ contains the edge $(m', -1)\rightarrow (m', 0)$, $L$ has height \[-\frac{1}{2}y_{m'}-\Sigma_{0}^{m-1}(y) = -\frac{1}{2}-\Sigma_{0}^{m-1}(y)\] since $(m', -1)\rightarrow (m', 0)$ implies $y_{m'} = 1$.
        
		Note that the edge $(M', b)\rightarrow (M',b+1)$ has height \[\frac{1}{2}y_{M'}+\Sigma_0^b(x)-\Sigma_0^{M'}(y) = \frac{1}{2}+\Sigma_0^b(x)-\Sigma_0^{M'}(y)\] since $(M', b)\rightarrow (M', b+1)$ implies $y_{M'} = 1$. Since
		$(M', b)\rightarrow (M',b+1)$ is in $L$, 
		\[-\frac{1}{2}-\Sigma_{0}^{m-1}(y) = \frac{1}{2}+\Sigma_0^b(x)-\Sigma_0^{M'}(y)\]
		Rearranging yields $\Sigma_{m'}^{M'}(y) = 1+\Sigma_0^b(x)$. Since $a = 0$ and $\Sigma_a^b(x) = r(x)$, $\Sigma_{m'}^{M'}(y) = r(x)+1$. Thus $y_{m'},y_{m'+1},\ldots, y_{M'}$ is positively overflowing.
	\end{proof}
	
	\begin{proof}[Proof of Theorem~\ref{thm:cylindergenerate}]
		Let $L$ be a hitomezashi loop with homology class 1 and bounding cylinder $\ssthat{(x, y)}{m' \leq x \leq M'}$. Then by Proposition \ref{prop:satisfies} there exists $m \leq m' < M' \leq M$ such that $y_{m'}, y_{m'+1}, \ldots, y_{M'}$ is positively overflowing. By Proposition \ref{prop:exists}, there exists a hitomezashi loop $L'$ with homology class 1 in the cylinder $\ssthat{(x, y)}{m' \leq x \leq M'}$. Suppose for sake of contradiction that $L' \neq L$. Then $L'$ and $L$ are edge-disjoint and have no transverse intersections. Since $L$ has bounding cylinder $\ssthat{(x, y)}{m \leq x \leq M}$, it has vertical edges $u, v$ at $x = m$ and $x = M$. But since $L'$ has homology class 1 it splits the cylinder into two connected components with $v$ and $u$ in distinct components, so there is no path between $u$ and $v$ without transverse intersections with $L'$, which is a contradiction. Therefore, $L' = L$, so $m' = m$ and $M' = M$, implying $y_m, y_{m+1}, \ldots, y_M$ is minimally positively overflowing.
		
		\begin{figure}[ht]
			\centering
			\includegraphics[height=10cm]{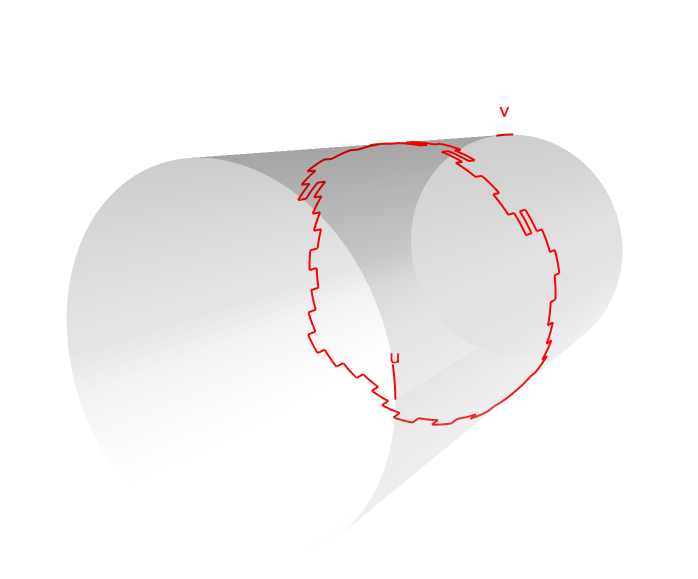}
			\caption{An illustration of the uniqueness argument from the proof of Theorem \ref{thm:cylindergenerate}.
            Here, the loop $L'$ in red disconnects edge $u$ from edge $v$.}\label{FigHito3}
		\end{figure}
		
		Conversely, suppose $y_m, y_{m+1}, \ldots, y_M$ is minimally positively overflowing. Then by Proposition \ref{prop:exists} there exists a hitomezashi loop $L$ with homology class 1 within $\ssthat{(x, y)}{m \leq x \leq M}$. By the above argument, $L$ has bounding cylinder $\lbrace m' \leq x \leq M' \rbrace$ such that $y_{m'}, y_{m'+1}, \ldots, y_{M'}$ is minimally positively overflowing. Thus $m' = m$ and $M' = M$, so $L$ indeed has bounding cylinder $\ssthat{(x, y)}{m \leq x \leq M}$. It remains to show that $L$ is unique. Suppose $L'$ has homology class 1, and has bounding cylinder $\lbrace m \leq x \leq M\rbrace$ and $L\neq L'$. Then by the same argument we can find edges $v, u$ in $L$ such that $L'$ splits the cylinder with $v,u$ in different components, which is a contradiction. Thus, $L = L'$, proving uniqueness. See Figure \ref{FigHito3}.
	\end{proof}
	\subsection{Nonexistence when $\Sigma(x) \neq 0$}
	The purpose of this section is to prove Theorem \ref{thm:cylinderdegenerate}. To do this, we begin with the following observation.
	\begin{lemma}
	\label{lem:deltax}
	Consider the hitomezashi pattern $\Cloth_{\Z\times \Z/N\Z}(x,y)$ restricted to edges in $\ssthat{(x, y)}{a \leq x \leq b}$.
	If $\Sigma(x) \neq 0$, then there exists a hitomezashi path with endpoints on $x = a$ and $x = b$.
	\end{lemma}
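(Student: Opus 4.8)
The plan is to exploit the fact that hitomezashi paths follow the edge orientations and alternate between horizontal and vertical edges, so they are the orbits of a deterministic and reversible dynamical system: at every vertex both the incoming and the outgoing edge of a trajectory are forced. Note that the height function is unavailable here, since $\Sigma(x)\neq 0$ makes it multivalued on the cylinder, so I would argue by a flux/counting argument instead. First I would restrict attention to the finite cylinder $\{a, a+1, \ldots, b\} \times \Z/N\Z$ and observe that every horizontal edge lies on exactly one maximal hitomezashi trajectory. Because a vertical step never changes the horizontal coordinate $i$ (and $i$ does not wrap on the cylinder), a trajectory can only leave the strip on a forced horizontal step out of column $a$ or column $b$. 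Hence each maximal trajectory is either a loop contained in the strip or a directed hitomezashi path whose two endpoints lie on the boundary columns $i = a$ and $i = b$, in some combination. This is the decomposition I will count with.

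Next I would introduce a vertical cut $C_k$, for a fixed $a \le k \le b-1$, consisting of the $N$ horizontal edges joining columns $k$ and $k+1$, one in each row $j \in \Z/N\Z$. The edge in row $j$ points rightward exactly when $x_j = 1$, so the net rightward flux across $C_k$ equals $\sum_{j \in \Z/N\Z} x_j = \Sigma(x)$. On the other hand, since each horizontal edge of $C_k$ belongs to exactly one trajectory, this same flux is the sum over all trajectories of their net rightward crossings of $C_k$. For a directed trajectory the net number of rightward crossings telescopes to $[\,i_{\mathrm{end}} > k\,] - [\,i_{\mathrm{start}} > k\,]$, since $i$ changes only on horizontal steps.

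This telescoping kills the loop contributions: every loop returns to its starting vertex, so it crosses $C_k$ a net of zero times, regardless of its vertical homology class. A boundary-to-boundary path contributes $0$ if both of its endpoints lie on the same side of the cut (both on $i = a$ or both on $i = b$) and contributes $\pm 1$ if it runs between $i = a$ and $i = b$. Consequently $\Sigma(x)$ equals the signed count of paths crossing from column $a$ to column $b$. Since $\Sigma(x) \neq 0$, at least one such crossing path must exist, which is exactly the desired hitomezashi path with endpoints on $x = a$ and $x = b$.

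The routine parts are the telescoping identity and the bookkeeping of edge orientations. The step requiring the most care is the first one: making the trajectory decomposition rigorous, i.e. checking that forward/backward determinism genuinely partitions the strip's edges into loops and paths that can terminate only on the boundary columns, and confirming that loops of nonzero homology (which wind vertically but not horizontally) still carry zero net horizontal flux. Once that structural claim is in place, the flux comparison is immediate.
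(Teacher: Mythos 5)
Your proof is correct and takes essentially the same approach as the paper's: decompose the edges of the strip into loops and boundary-to-boundary paths, then equate a signed count of horizontal edges with the trajectories' contributions, noting that loops and same-side paths contribute zero, so crossing paths must account for the nonzero $\Sigma(x)$. The only cosmetic difference is that you measure the net flux through a single vertical cut (getting $\Sigma(x)$), whereas the paper sums $\Delta x$ over all horizontal edges of the strip (getting $(b-a)\Sigma(x)$); these are equivalent bookkeepings of the same conservation argument.
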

	\begin{proof}
	Note that the sum of $\Delta x$ over all hitomezashi paths and loops restricted to $\ssthat{(x, y)}{a \leq x \leq b}$ in $\Cloth_{\Z\times \Z/N\Z}$ is $(b-a)\Sigma(x)$, the number of rightward edges minus the number of leftward edges.
	Recall that all hitomezashi loops have $\Delta x = 0$. Each hitomezashi path has endpoints in $\ssthat{(x, y)}{x \in \lbrace a, b \rbrace}$. Hitomezashi paths with both endpoints on $x = a$ or both endpoints on $x = b$ have $\Delta x = 0$, while $\Delta x = b-a$ for ``leftward'' paths from $x = a$ to $x = b$ and $\Delta x = a-b$ for  ``rightward'' paths from $x = b$ to $x = a$. Thus, $(b-a)[\text{\# rightward paths} - \text{\# leftward paths}] = \Sigma(x)(b-a)$
	so
		\[\text{\# rightward paths} - \text{\# leftward paths} = \Sigma(x).\]
	Hence, $\Sigma(x) > 0$ implies the existence of a rightward path from $x = a$ to $x = b$, whereas $\Sigma(x) < 0$ implies the existence of a leftward path from $x = b$ to $x = a$. Either way, such a leftward or rightward hitomezashi path has endpoints on $x = a$ and $x = b$.
	\end{proof}
	Now it remains to show Lemma \ref{lem:deltax} implies Theorem \ref{thm:cylinderdegenerate}.
	\begin{proof}[Proof of Theorem \ref{thm:cylinderdegenerate}]
	Suppose for sake of contradiction that some hitomezashi loop $L$ with homology class $\pm 1$ existed in $\Cloth_{\Z\times \Z/N\Z}(x,y)$. Then it must have some bounding cylinder $\lbrace m\leq x \leq M \rbrace$. But by Lemma \ref{lem:deltax} there exist vertices $u, v$ connected by a hitomezashi path such that $u$ is on the line $x = m-1$ and $v$ is on the line $x = M+1$. This path must transversely intersect $L$, which is a contradiction. Thus, no such nontrivial hitomezashi loop exists.
 	
 	Now suppose that there does not exist an infinite hitomezashi path with edges of the form $(i, c_i) \rightarrow (i + 1, c_i)$ exists in $\Cloth_{\Z\times \Z/N\Z}$. Then all edges must either be part of a trivial hitomezashi loop, or an infinite hitomezashi path that is contained in a subset of the form $\ssthat{(x, y)}{x \leq M}$ or $\ssthat{(x, y)}{m \leq x}$. We will show that there must exist $N+1$ (and in fact, infinitely many) hitomezashi loops or infinite hitomezashi paths containing an edge of the form $\lbrace(0, j), (1, j)\rbrace$.
    
	Let $0 = a_0<b_0 = 1$. By Lemma \ref{lem:deltax} there exists a hitomezashi path $P_0$ with endpoints on $x = a_0$ and $x = b_0$. By assumption, $P_0$  must be part of either a finite hitomezashi loop $L_0$, which has some bounding cylinder $\ssthat{(x, y)}{m_0 \leq x \leq M_0}$ or an infinite hitomezashi loop contained in $\ssthat{(x, y)}{m_0 \leq x}$ and $\ssthat{(x, y)}{x \leq M_0}$.     
    
        We use the following protocol for constructing $L_{n+1}$ from $L_n$. 
            
        In the case where $L_n$ is finite and has bounding cylinder $\ssthat{(x, y)}{m_n \leq x \leq M_n}$ choose $a_{n+1} = m_n - 1, b_{n+1} = M_n + 1$. In the case where $L_n$ is infinite and contained in $\ssthat{(x, y)}{m_n \leq x}$, choose $a_{n+1} = m_n - 1, b_{n+1} = b_{n+1}$. In the case where $L_n$ is infinite and contained in $\ssthat{(x, y)}{x \leq M_n}$, choose $a_{n+1} = a_{n+1}, b_{n+1} = M_n + 1$.

        Let $P_{n+1}$ be a hitomezashi path connecting $a_{n+1}$ and $b_{n+1}$. Note that $P_{n}$ cannot be part of $L_k$ for $0 \leq k \leq n$, as no $L_k$ contains both vertices at $x = a_{n+1}$ and $x = b_{n+1}$.
        Hence $P_n$ is part of either a finite hitomezashi path or an infinite hitomezashi path $L_{n+1}$ that is distinct from $L_0, L_1, \ldots, L_n$. In such a manner, we construct $N+1$ hitomezashi paths $L_0, \ldots, L_{N+1}$, each of which are part of a distinct finite hitomezashi loop or infinite hitomezashi path. Thus the hitomezashi paths $L_0, \ldots, L_N$ are pairwise edge-disjoint. But since $P_k\subseteq L_k$ is a path with endpoints $x = a_k$ to $x=b_k$, and $a_k \leq 0 = a_0 < b_0 = 1\leq b_k$, each of $P_0, \ldots, P_N$ contains an edge $\lbrace (0,j), (1, j)\rbrace$.\nl

 	\begin{figure}[ht]
	\centering
	\includegraphics[height=10cm]{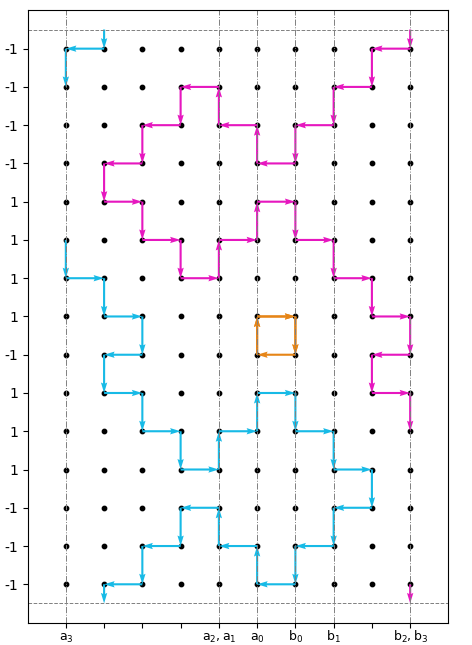}
	\caption{Examples of all three cases in the proof of Theorem \ref{thm:cylinderdegenerate}. $L_0$ is the finite loop in brown, $L_1$ is the infinite path with a rightmost edge, indicated in blue, and $L_2$ is the infinite path with a leftmost edge, in purple.}\label{FigHito4}
	\end{figure}
	
	Since there are only $N$ such edges, this is a contradiction and there must exist an infinite hitomezashi path.
	
	\end{proof}
	\section{Homology class of hitomezashi loops on the torus}\label{sec:torusresults}
In toroidal hitomezashi patterns with $\Sigma(x) \neq 0$ and $\Sigma(y) \neq 0$, Ren and Zhang \cite{RenZhang} determined both the homology classes and the number of nontrivial hitomezashi loops. Building on this, we give a complete description of the nontrivial loops that arise in toroidal hitomezashi patterns. In Subsection \ref{subsec:torclass} we construct a bijection, analogous to Theorem \ref{thm:cylindergenerate}, for the case when either $\Sigma(x) = 0$ or $\Sigma(y) = 0$, and use it to resolve an open question posed by Ren and Zhang. In Subsection \ref{subsec:torcount} we apply this bijection to determine the homology classes and counts of nontrivial hitomezashi loops in all toroidal patterns, thereby completing their results in this direction.
	\subsection{Possible homology classes}\label{subsec:torclass}

    We recall Ren and Zhang's results about the possible homology classes of toroidal hitomezashi loops. 
	\begin{theorem}[Ren-Zhang ~\cite{RenZhang}]
	Suppose $L$ is a nontrivial toroidal hitomezashi loop in $\Cloth_{\Z/M\Z \times \Z/N\Z}(x,y)$. Then $L$ must have one of the following homology classes, as described in the table below. 
    \[
    \resizebox{0.9\textwidth}{!}{%
    \begin{tabular}{|c|c|c|}
    	\hline
    	&$\Sigma(x)\neq 0$&$\Sigma(x) = 0$\\
    	\hline
    	$\Sigma(y)\neq 0$&
    	$\Sigma(x)/\gcd(|\Sigma(x)|,|\Sigma(y)|), \Sigma(y)/\gcd(|\Sigma(x)|,|\Sigma(y)|))$
    	&$(0,\pm1)$\\
    	\hline
    	$k(y) = 0$&$(\pm1,0)$&$(\pm1,0)$ or $(0,\pm1)$\\
    	\hline
    \end{tabular}%
    }
    \]
	\end{theorem}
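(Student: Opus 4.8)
The plan is to reduce the determination of the class to two global bookkeeping devices—an edge-counting ``flux'' identity and a monodromy relation for the planar height function—and then dispatch the four cases of the table. Throughout I would use Observation~\ref{obs:possiblehomology}: every nontrivial loop has class $\pm(u,v)$ for one fixed coprime pair $(u,v)$, so it suffices to pin down this primitive direction together with the relevant signs.

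First I would set up the flux identity
\[\sum_{L}\Bigl(\tfrac{\Delta x(L)}{M},\tfrac{\Delta y(L)}{N}\Bigr)=(\Sigma(x),\Sigma(y)),\]
the sum being over all loops. As in Lemma~\ref{lem:deltax}, the point is that in the oriented pattern every vertex meets all four incident edges, so the loops partition the edge set; summing the contribution $\pm1$ of each horizontal edge to $\Delta x$ row by row gives $M\Sigma(x)$ (each of the $N$ rows contributes $Mx_j$), and symmetrically the vertical edges give $N\Sigma(y)$. Trivial loops contribute $0$, so writing each nontrivial class as $\varepsilon_L(u,v)$ yields $\bigl(\sum_L\varepsilon_L\bigr)(u,v)=(\Sigma(x),\Sigma(y))$.

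The second device is the key one. I would lift a nontrivial loop $L$ of class $(u,v)$ to the universal cover $\Cloth_{\Z\times\Z}(\tilde x,\tilde y)$, where it becomes an infinite planar hitomezashi path invariant under translation by $(uM,vN)$. By the planar height function of Section~\ref{sec:height}, all edges of a path share one height, so this lift has a single constant height; on the other hand, the defining formula (Definition~\ref{defn:heightfunction}) shows that translating a region by $(uM,vN)$ changes its height by $v\Sigma(x)-u\Sigma(y)$. Constancy therefore forces
\[v\,\Sigma(x)=u\,\Sigma(y).\]
This one relation already produces the table's directions: if exactly one sum vanishes it forces the corresponding coordinate of $(u,v)$ to vanish, giving $(0,\pm1)$ or $(\pm1,0)$; if both are nonzero it forces $(u,v)\parallel(\Sigma(x),\Sigma(y))$, whence coprimality gives $(u,v)=\pm(\Sigma(x),\Sigma(y))/\gcd(|\Sigma(x)|,|\Sigma(y)|)$, and the flux identity selects the stated sign, since the net class is a positive multiple of the listed primitive vector.

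The remaining case $\Sigma(x)=\Sigma(y)=0$, where the monodromy relation degenerates to $0=0$, is what I expect to be the main obstacle: here I must rule out a genuinely diagonal class $(u,v)$ with $u,v\neq0$. My plan is to unroll the $x$-direction to the cylinder $\Cloth_{\Z\times\Z/N\Z}(x,\tilde y)$, which has $\Sigma(x)=0$ (so the cylinder height is well defined) and, since $\Sigma(y)=0$ as well, bounded height range. A diagonal loop would lift to an infinite path that both winds in the finite $y$-direction ($v\neq0$) and drifts to $x=\pm\infty$ ($u\neq0$) at constant height; I would derive a contradiction by showing that the constant-height and bounded-range constraints underlying Theorem~\ref{thm:cylindergenerate} confine any $y$-winding path to a bounded band of columns, incompatible with the drift. (The degenerate phenomenon when a sum is nonzero is exactly the absence of nontrivial cylinder loops in Theorem~\ref{thm:cylinderdegenerate}.) Making this confinement estimate precise—and, in the both-nonzero case, upgrading ``net class $=(\Sigma(x),\Sigma(y))$'' to the stronger assertion that no oppositely oriented loop occurs—are the two points I expect to require the most care.
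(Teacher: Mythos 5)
First, a point of comparison: the paper does not prove this theorem at all --- it is quoted from Ren--Zhang \cite{RenZhang} --- so there is no in-paper proof to measure you against, and I am judging your argument on its own merits. Your two devices are individually sound. The flux identity is correct (a version of it appears in Section~\ref{subsec:torcount} of the paper), and the monodromy relation $v\Sigma(x)=u\Sigma(y)$, obtained by lifting a class-$(u,v)$ loop to $\Cloth_{\Z\times\Z}(\tilde x,\tilde y)$, using that all edges of a hitomezashi path share one height, and computing that translation by $(uM,vN)$ shifts heights by $v\Sigma(x)-u\Sigma(y)$, is a clean and correct argument. Together with Observation~\ref{obs:possiblehomology} it completely settles the two mixed cells of the table: if exactly one of $\Sigma(x),\Sigma(y)$ vanishes, primitivity forces $(0,\pm1)$, respectively $(\pm1,0)$.

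The first genuine gap is the sign in the cell $\Sigma(x)\neq 0$, $\Sigma(y)\neq 0$. The table asserts that every nontrivial loop has class exactly $(\Sigma(x)/g,\Sigma(y)/g)$ with $g=\gcd(|\Sigma(x)|,|\Sigma(y)|)$, not merely $\pm$ this vector. Your sentence ``the flux identity selects the stated sign'' is wrong as stated: the flux identity controls only the net class $\bigl(\sum_L\varepsilon_L\bigr)(u,v)=(\Sigma(x),\Sigma(y))$, which is consistent with $g+k$ loops of the positive class coexisting with $k$ loops of the negative class for any $k\geq 1$. Intersection theory cannot exclude this either, since opposite classes have zero intersection number. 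You flag this at the end, but flagging is not filling: ruling out the opposite sign is essentially equivalent to Ren--Zhang's count theorem (exactly $g$ nontrivial loops), the other result the paper quotes, and no height or monodromy argument of the kind you use can do it --- indeed on the cylinder (Theorem~\ref{thm:cylindergenerate}) loops of both signs genuinely coexist, so any proof must exploit the nonzero toroidal monodromy in a more essential way.

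The second gap is the case $\Sigma(x)=\Sigma(y)=0$, and here the mechanism you propose would fail. Constant height does not confine a $y$-winding lift to a bounded band of columns: a vertical edge of the lift in column $i$ crossing the cut between rows $j$ and $j+1$ has height $\Sigma_0^j(x)-\tfrac12\bigl(\Sigma_0^{i-1}(\tilde y)+\Sigma_0^{i}(\tilde y)\bigr)$, and this constraint is periodic in $i$, so admissible columns recur in every period of $\tilde y$ and are compatible with unbounded drift. The contradiction must instead come from comparing crossings of the two extremal cuts. Since $v\neq 0$, the lift crosses every horizontal cut, in particular those where $\Sigma_0^j(x)$ attains its maximum and its minimum; constancy of height then forces the quantity $q_i=\tfrac12\bigl(\Sigma_0^{i-1}(\tilde y)+\Sigma_0^i(\tilde y)\bigr)$ to attain two values differing by $r(x)$. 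But $q_i$ is the average of consecutive values of a $\pm1$ walk whose range is $r(y)$, so its range is at most $r(y)-1$, giving $r(x)\leq r(y)-1$. Unrolling the other coordinate (legitimate since $\Sigma(y)=0$) and using $u\neq 0$ gives $r(y)\leq r(x)-1$ symmetrically, a contradiction. So your both-zero case is repairable within your framework --- and this repair is close in spirit to the paper's own height computations in Proposition~\ref{prop:satisfies} --- but the repair is a different argument from the one you describe.
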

	Recall from Observation $\ref{obs:possiblehomology}$ that on any given toroidal hitomezashi pattern $\Cloth_{\Z\times \Z/N\Z}(x,y)$, all nontrivial toroidal hitomezashi loops must be of the form $\pm (u,v)$ with $u, v$ coprime. Thus, when $\Sigma(x) = \Sigma(y) = 0$, loops of the form $\pm (0, 1)$ and $\pm (1,0)$ cannot coexist. In \cite[Problem 5.1]{RenZhang} Ren and Zhang ask:
    \begin{problem}\label{problem:range}
        When $\Sigma(x) = \Sigma(y) = 0$, is there a simple criterion on $x, y$ that tells us which homology classes can exist?
    \end{problem}
    \begin{figure}[ht]
    \includegraphics[height = 8cm]{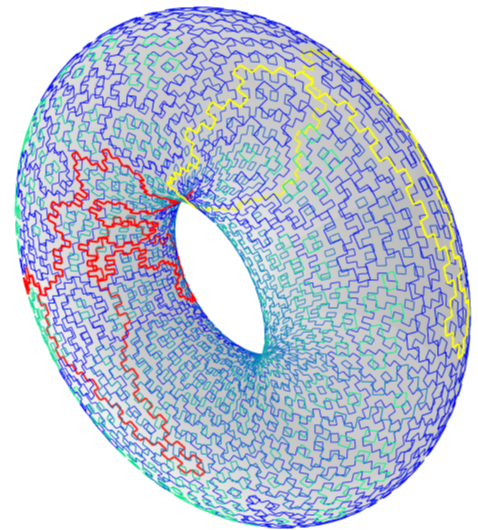}
    \caption{An example of a large (unoriented) toroidal hitomezashi pattern, with $\Sigma(x) = \Sigma(y) = 0$. The prominent red and yellow hitomezashi loops are the only ones with nonzero homology class.}\label{fig:toroidal}
    \end{figure}
    We answer this question in Corollary $\ref{cor:range}$.

	To show this, let us first observe an analogous bijection to Theorem $\ref{thm:cylindergenerate}$ by lifting to the cylinder.
	\begin{definition}
		\label{def:arc}
		Let $y \in \lbrace -1, 1\rbrace^{\Z/M\Z}$.
		An \textbf{arc} of $y$ is a contiguous sequence $y_a, y_{a+1}, \ldots, y_b$ of elements of $y$.
	\end{definition}
	Note that an arc of $y$ may not necessarily be a subsequence of $y$: the sequence may have more than $M$ elements and may loop around $y$ any number of times. Also, note that $y_a, y_{a+1}, \ldots, y_b$ and $y_{a+M}, y_{a+M+1}, \ldots, y_{b+M}$ are the same arc, but $y_a, y_{a+1}, \ldots, y_b$ and $y_{a}, y_{a+1}, \ldots, y_b, y_{b+1}, \ldots, y_{b+M}$ are different arcs.
	\begin{definition}
		Let $L$ be a hitomezashi loop in $\Cloth_{\Z/M\Z \times \Z/N\Z}(x,y)$ of homology $(0, \pm 1)$ (resp. ($\pm 1, 0$)). Then the \textbf{bounding arc} of $L$ is the bounding sequence of any of the preimages of $L$ when lifting to the cylindrical hitomezashi pattern $\Cloth_{\Z \times \Z/N\Z}(x,\tilde{y})$ (resp. $\Cloth_{\Z/M\Z \times \Z}(\tilde{x}, y)$).
	\end{definition}
	Note that this notion is well-defined since any two preimages of the same loop will be identical up to a horizontal (resp. vertical) shift by some multiple of $M$ (resp. $N$). Thus, their bounding arcs are the same.
	\begin{corollary}
		\label{cor:toroidal}
		Let $x\in \lbrace -1, 1\rbrace^{\Z/N\Z}$ such that $\Sigma(x) = 0$, and let $y\in \lbrace -1, 1\rbrace^{\Z/M\Z}$. Then, if $L$ is a hitomezashi loop of homology $(0, 1)$ (resp. $(0, -1)$), its bounding arc is a minimally positively overflowing arc (resp. a minimally negatively overflowing arc). Conversely, if there is an arc $y_a, \ldots, y_b$ that is minimally positively overflowing (resp. minimally negatively overflowing) then there exists a unique hitomezashi loop $L$ of homology $(0, 1)$ (resp. $(0, -1)$) with bounding arc $y_a, \ldots, y_b$.
	\end{corollary}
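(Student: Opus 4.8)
The plan is to deduce the statement from Theorem~\ref{thm:cylindergenerate} by unwrapping the torus in the $x$-direction to obtain a cylinder. Concretely, I would work with the covering map $p\colon \Cloth_{\Z\times\Z/N\Z}(x,\tilde y)\to\Cloth_{\Z/M\Z\times\Z/N\Z}(x,y)$ that reduces the first coordinate modulo $M$, where $\tilde y$ is the lift of $y$ to $\Z$. Since horizontal edges are governed by $x$ (unchanged) and vertical edges by the $M$-periodic sequence $\tilde y$, the translation $\sigma\colon(i,j)\mapsto(i+M,j)$ is a graph automorphism of the cylinder pattern and generates the deck group of $p$. The whole argument then amounts to transporting the cylindrical bijection of Theorem~\ref{thm:cylindergenerate} through $p$ and quotienting by $\sigma$.

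The first main step is to show that lifting and projecting are mutually inverse bijections between homology-$(0,1)$ loops on the torus and homology-$1$ loops on the cylinder, taken up to the action of $\sigma$. Given a torus loop $L$ of homology $(0,1)$, its total horizontal displacement is $\Delta x=0$, so following $L$ and lifting edge by edge returns to the starting lift after one traversal; hence $L$ lifts to a genuine (finite, closed) cylinder loop $\tilde L$, well defined up to $\sigma$, and $\tilde L$ still wraps the $\Z/N\Z$ factor exactly once, so it has homology $1$. Conversely, a homology-$1$ cylinder loop $\tilde L$ has $\Delta x=0$ and projects under $p$ to a closed alternating circuit $p(\tilde L)$ with $\Delta y/N=1$ and $\Delta x/M=0$, i.e.\ homology $(0,1)$. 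I would then check that $p(\tilde L)$ is a legitimate hitomezashi loop: projection preserves the alternation of horizontal and vertical edges, and by Observation~\ref{obs:possiblehomology} any hitomezashi circuit has no transverse self-intersection, so even when two strands of $\tilde L$ meet at a common image vertex they occupy the two non-crossing passes permitted by the orientation there. Injectivity and surjectivity of both directions follow from the covering-space relationship.

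With this dictionary in hand, the second step is to push Theorem~\ref{thm:cylindergenerate} across. On the cylinder, that theorem gives a bijection between homology-$1$ loops and minimally positively overflowing subsequences of $\tilde y$, and it is manifestly $\sigma$-equivariant, since shifting a loop by $M$ shifts its bounding sequence by $M$. Quotienting both sides by $\sigma$ turns the right-hand side into minimally positively overflowing arcs of $y$ (two subsequences of $\tilde y$ differing by a multiple of $M$ are, by Definition~\ref{def:arc}, the same arc) and turns the left-hand side into homology-$(0,1)$ torus loops by the previous paragraph. Since the bounding arc of a torus loop is defined as the bounding sequence of any of its lifts, the resulting bijection matches loops to their bounding arcs, yielding both the forward and converse statements; uniqueness on the torus follows from the uniqueness clause of Theorem~\ref{thm:cylindergenerate} after choosing lifts with equal bounding sequences. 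The negative $(0,-1)$ case is identical with ``positively'' replaced by ``negatively'' throughout.

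The step I expect to be most delicate is verifying that projection sends a homology-$1$ cylinder loop to a bona fide torus hitomezashi loop when its bounding cylinder is wider than $M$, since then the projected circuit revisits vertices; the content there is that these repeated visits are non-transverse (forced by the alternation condition together with Observation~\ref{obs:possiblehomology}), so $p(\tilde L)$ is a single valid loop rather than a self-crossing object. Establishing the clean $\sigma$-equivariance of the cylindrical bijection, so that the quotient is literally the arc correspondence, is the other point requiring care.
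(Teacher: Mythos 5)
Your proposal is correct and follows essentially the same route as the paper: lift the torus pattern to the cylinder $\Cloth_{\Z\times\Z/N\Z}(x,\tilde y)$, apply Theorem~\ref{thm:cylindergenerate} there, and quotient both loops and overflowing subsequences by the horizontal shift by $M$ to obtain the arc correspondence. The paper phrases this as an equivalence-class argument rather than in covering-space/deck-group language, and it treats the lift/projection correspondence (closedness of lifts, legitimacy of projections, shift-equivariance) more informally than you do, but the underlying argument is the same.
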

	Of course, the analogous statement holds on toroidal hitomezashi patterns of the form $\Cloth_{\Z/M\Z\times \Z/N\Z}(x,y)$ when $\Sigma(y) = 0$. 
	\begin{proof}
		We prove Corollary \ref{cor:toroidal} for minimally positively overflowing arcs and loops of homology $(0, 1)$; the argument for minimally negatively overflowing arcs and loops with homology class $(0, -1)$ is identical. Begin by lifting $\Cloth_{\Z/M\Z \times \Z/N\Z}(x, y)$ to the cylindrical pattern $\Cloth_{\Z \times \Z/N\Z}(x,\tilde{y})$.
        
        Then loops with homology class $(0, 1)$ in the toroidal pattern correspond to infinitely many loops with homology class $1$, all identical up to horizontal shifting by $M$. By Theorem \ref{thm:cylindergenerate}, loops with homology class $1$ on the cylinder correspond to minimally positively overflowing subsequences of $\tilde{y}$. Certainly the set of such subsequences is invariant under horizontal shift by $M$. Say two subsequences of $\tilde{y}$ are equivalent if they are identical up to a horizontal shift by some multiple of $M$. Similarly, two hitomezashi loops are equivalent if one can be obtained from the other via a horizontal shift by some multiple of $M$.
        
        It is clear that each equivalence class of minimally positively overflowing subsequences in $\tilde{y}$ corresponds to a single minimally positively overflowing arc. Also, each equivalence class of minimally positively overflowing subsequences corresponds to an equivalence class of hitomezashi loops with homology class $1$, which corresponds to a single hitomezashi loop of homology $(0,1)$ in $\Cloth_{\Z/M\Z \times \Z/N\Z}$. Thus minimally positively overflowing arcs are in bijection with  hitomezashi loops of homology $(0,1)$, as desired.
	\end{proof}
	Now we answer Problem \ref{problem:range}.
	\begin{proof}[Proof of Corollary \ref{cor:range}]
		Suppose $r(x) < r(y)$. Then by definition there exists an arc whose sum is greater than $r(x)$. Thus, this arc is a positively overflowing arc, which implies there exists a minimally positively overflowing arc. Thus, there exists a hitomezashi loop $L$ of homology $(0, 1)$. The argument is analogous when $r(x) > r(y)$. Finally, suppose $r(x) = r(y)$. Then there do not exist any overflowing arcs (with respect to either $x$ or $y$). Thus, no loops with homology class $(0, \pm 1)$ or $(\pm 1, 0)$ exist.
	\end{proof}
	
	\subsection{Nontrivial loop count}\label{subsec:torcount}
	In this section, we collect some results about the number of nontrivial hitomezashi loops, furthering the work of Ren and Zhang.
	\begin{theorem}[Ren, Zhang ~\cite{RenZhang}]
		Let $\Cloth_{\Z/M\Z \times \Z/N\Z}(x,y)$ be a toroidal hitomezashi pattern with $\Sigma(x) \neq 0$ and $\Sigma(y)\neq 0$. The number of nontrivial hitomezashi loops is $\gcd(|\Sigma(x)|,|\Sigma(y)|)$.
	\end{theorem}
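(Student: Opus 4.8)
The plan is to recover the count by computing the sum of the homology classes of \emph{all} hitomezashi loops on the torus through a single global edge count, in the spirit of Lemma~\ref{lem:deltax}, and then dividing by the common homology class that the Ren--Zhang classification (the table above) assigns to the nontrivial loops. First I would record that on the torus the oriented pattern $\Cloth_{\Z/M\Z\times\Z/N\Z}(x,y)$ decomposes into a disjoint union of oriented alternating loops that together use every edge exactly once. At each vertex $(i,j)$ there is exactly one incoming and one outgoing horizontal edge (from/to the left if $x_j=1$, the right if $x_j=-1$) and exactly one incoming and one outgoing vertical edge (governed by $y_i$); alternation forces each loop to pass from its incoming horizontal edge to the outgoing vertical edge, and from its incoming vertical edge to the outgoing horizontal edge. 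This routing is deterministic, so following it partitions the $2MN$ edges into closed alternating circuits, i.e.\ the hitomezashi loops. (Since the graph is finite there are no infinite paths, unlike the cylindrical case.)

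Next I would compute $\sum_L \Delta x(L)$, the sum over all loops $L$. Because each edge lies in exactly one loop, this sum counts every horizontal edge once, with its forced sign. All $M$ horizontal edges in row $j$ point rightward when $x_j=1$ and leftward when $x_j=-1$, contributing $M x_j$; summing over the $N$ rows gives $\sum_L \Delta x(L) = M\,\Sigma(x)$, and symmetrically $\sum_L \Delta y(L) = N\,\Sigma(y)$. Dividing by $M$ and $N$ respectively, the homology classes of all loops sum to exactly $(\Sigma(x),\Sigma(y))$.

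To conclude, write $d = \gcd(|\Sigma(x)|,|\Sigma(y)|)$ and $(u,v) = (\Sigma(x)/d,\Sigma(y)/d)$. The Ren--Zhang classification asserts that when $\Sigma(x)\neq 0$ and $\Sigma(y)\neq 0$ every nontrivial loop has homology class exactly $(u,v)$, while trivial loops have class $(0,0)$. If $p$ is the number of nontrivial loops, then comparing the two descriptions of the total homology gives $p\,(u,v) = (\Sigma(x),\Sigma(y)) = d\,(u,v)$; since $u \neq 0$ this forces $p = d$, as claimed.

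The point that needs the most care is that the nontrivial loops all carry the \emph{same} homology class, rather than cancelling in oppositely oriented pairs. The flux computation by itself only yields the signed count $p_{+} - p_{-} = d$, where $p_{\pm}$ counts loops of homology $\pm(u,v)$; it does not on its own preclude disjoint, homologically parallel loops traversed in opposite directions. What rules these out is precisely the cited Ren--Zhang classification, which fixes the sign of the homology class. Absent that input one would instead have to argue directly from the forced edge orientations (a loop never runs straight, so each is a staircase, and the vertical edges in any fixed column are all oriented alike) that no anti-parallel loop can occur; this is the only delicate step, and here it is supplied for free by the quoted result.
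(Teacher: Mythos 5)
Your proof is correct, but note that it differs structurally from what the paper does: the paper never proves this statement at all --- it is quoted from Ren and Zhang, and the proof of Corollary~\ref{cor:toroidalcount} handles this case purely by citing \cite[Section 4]{RenZhang}. What you have done is assemble two ingredients that both appear in the paper but are never combined there: (i) the flux identity, recorded just before Corollary~\ref{cor:toroidalcount}, that the homology classes of all loops sum to $(\Sigma(x),\Sigma(y))$ (your normalization, dividing $\Delta x$ by $M$ and $\Delta y$ by $N$, is the one consistent with Definition~\ref{defn:toroidhomology}; the paper's displayed version swaps the roles of $M$ and $N$, so yours is the more careful computation), and (ii) the quoted classification table, which pins every nontrivial loop to the \emph{single} class $(u,v)=(\Sigma(x)/d,\Sigma(y)/d)$, $d=\gcd(|\Sigma(x)|,|\Sigma(y)|)$, rather than $\pm(u,v)$. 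Given these, $p\,(u,v)=d\,(u,v)$ and hence $p=d$ is immediate, and the derivation is not circular, since the classification concerns which classes occur, not how many loops carry them. What your route buys is a short, self-contained derivation of the count from results already stated in the paper. What it does not buy is independence from \cite{RenZhang}: as you yourself flag, the entire difficulty is excluding anti-parallel loops --- showing that no loop has class $-(u,v)$, rather than merely that the signed count equals $d$ --- and that is precisely the content of the sign-pinned table; your sketch of a direct argument for this (staircase structure, uniformly oriented columns) is not developed. So you have traded the citation of one Ren--Zhang theorem for the citation of another: a legitimate and arguably cleaner organization of the material, but a conditional derivation rather than an independent proof.
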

	Using our discussion in the previous section, we can complete the classification of the number of nontrivial hitomezashi loops in the cases when one or both of $\Sigma(x)$ and $\Sigma(y)$ are zero.
	\begin{corollary}
		\label{cor:toroidalcount}
		Let $\Cloth_{\Z/M\Z \times \Z/N\Z}(x,y)$ be a toroidal hitomezashi pattern. Define $c(x,n)$ to be the number of minimal arcs of $x$ whose sum is more than $n$ if $n>0$ and the number whose sum is less than $n$ if $n<0$. We will also use the notation $\sgn(x)$ to be the sign function on $\Sigma(x)$. Then the number and homology class of the hitomezashi loops present are as follows: 
		\begin{itemize}
			\item If $\Sigma(x)\neq 0$ and $\Sigma(y) \neq 0$, then there are $\gcd(|\Sigma(x)|,|\Sigma(y)|)$ loops with homology class $(\Sigma(x)/\gcd(|\Sigma(x)|,|\Sigma(y)|),\Sigma(y)/\gcd(|\Sigma(x)|,|\Sigma(y)|))$.
			\item If $\Sigma(x) = 0$ and $\Sigma(y)\neq 0$, then there are $c(y,-r(x)\sgn(y))$ loops with homology class $(0, -\sgn(y))$ and $|\Sigma(y)|+c(y,-r(x)\sgn(y))$ loops with homology class $(0, \sgn(y))$.
			\item If $\Sigma(x) \neq 0$ and $\Sigma(y) = 0$, then there are $c(x,-r(y)\sgn(x))$ loops with homology class $(-\sgn(x), 0)$ and $|\Sigma(x)|+c(x,-r(y)\sgn(x))$ loops with homology class $(\sgn(x),0)$.
			\item If $\Sigma(x) = \Sigma(y) = 0$ then we have the following cases:
			\begin{itemize}
				\item If $r(x) > r(y)$, then there are $c(y,r(x))$ loops with homology class $(1,0)$ and $c(y,r(x))$ loops with homology class $(-1, 0)$.
				\item If $r(x) = r(y)$, then there are no nontrivial loops of any homology class.
				\item If $r(x) < r(y)$, then there are $ c(x,r(y))$ loops with homology class $(0,1)$ and $c(x,r(y))$ loops with homology class $(0, -1)$.
			\end{itemize}
		\end{itemize}
	\end{corollary}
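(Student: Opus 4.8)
The plan is to reduce every bullet to a count of minimally overflowing arcs together with a single conservation identity. The first bullet is exactly the quoted theorem of Ren and Zhang, so I would simply cite it. For the other three bullets I would first note that Corollary~\ref{cor:toroidal} assumes only $\Sigma(x)=0$ and imposes no hypothesis on $\Sigma(y)$; hence whenever $\Sigma(x)=0$ it gives a bijection between loops of homology $(0,1)$ (resp.\ $(0,-1)$) and the minimally positively (resp.\ negatively) overflowing arcs of $y$, so these classes contain $c(y,r(x))$ and $c(y,-r(x))$ loops by the definition of $c$. The mirror statement, obtained by lifting the vertical direction and invoking the analogue of Corollary~\ref{cor:toroidal} valid when $\Sigma(y)=0$, counts classes $(1,0)$ and $(-1,0)$ as $c(x,r(y))$ and $c(x,-r(y))$.

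The remaining content is the relation between the positive and negative counts, which I would extract from a toroidal analogue of Lemma~\ref{lem:deltax}. Since every vertical edge of $\Cloth_{\Z/M\Z\times\Z/N\Z}$ lies in a unique loop, summing $\Delta y$ over all loops equals the number of upward minus downward vertical edges of the whole torus, namely $N\Sigma(y)$. By Definition~\ref{defn:toroidhomology} a loop of homology $(a,b)$ has $\Delta y=bN$, so trivial loops and loops of class $(\pm1,0)$ contribute nothing while a loop of class $(0,\pm1)$ contributes $\pm N$; dividing by $N$ and writing $\#(a,b)$ for the number of loops of homology $(a,b)$ gives
\[
\#(0,1)-\#(0,-1)=\Sigma(y),
\]
and symmetrically $\#(1,0)-\#(-1,0)=\Sigma(x)$. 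Combined with Observation~\ref{obs:possiblehomology}, these identities even recover which classes occur: a nonzero $\Sigma(y)$ forces a loop of class $(0,\pm1)$ and hence, by coprimality, excludes class $(\pm1,0)$.

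Assembling the pieces: in the second bullet ($\Sigma(x)=0$, $\Sigma(y)\neq0$) the minority class has $c(y,-r(x)\sgn(y))$ loops directly from the bijection, and the identity promotes this to $|\Sigma(y)|+c(y,-r(x)\sgn(y))$ for the majority class; the third bullet is identical with the roles of $x$ and $y$ interchanged. In the fourth bullet both identities have vanishing right-hand side, so the two nontrivial classes in play occur equally often, and Corollary~\ref{cor:range} selects which pair is nonempty: $c(y,r(x))$ loops of each of $(0,\pm1)$ when $r(x)<r(y)$, $c(x,r(y))$ loops of each of $(\pm1,0)$ when $r(x)>r(y)$, and none when $r(x)=r(y)$ since neither sequence then has an overflowing arc. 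I would also remark that all these counts are finite even though arcs may wind repeatedly, because a minimally overflowing arc has sum exactly $\pm(r(x)+1)$ or $\pm(r(y)+1)$, and the drift or periodicity of the partial-sum walk bounds its length.

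I expect the conservation identity, rather than the arc bookkeeping, to be the main obstacle. It rests on two structural facts: that each vertical edge lies in a unique loop, already used implicitly in Lemma~\ref{lem:deltax}, and that the vertical homology coordinate is literally $\Delta y/N$, which is Definition~\ref{defn:toroidhomology}. A direct combinatorial proof that the minimally positively overflowing arcs outnumber the negatively overflowing ones by exactly $\Sigma(y)$ is possible by pairing ascending and descending ladder epochs of the partial-sum walk of $\tilde{y}$, but it is markedly more delicate than the one-line flux computation, so I would make the conservation identity the backbone of the argument and reserve the arc analysis for the bijective counts alone.
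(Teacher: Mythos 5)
Your proposal is correct and takes essentially the same route as the paper: the first bullet is cited from Ren--Zhang, each individual homology class is counted via the bijection of Corollary~\ref{cor:toroidal} (and its $\Sigma(y)=0$ analogue), the gap between opposite classes comes from the conservation identity that the homology classes of all loops sum to $(\Sigma(x),\Sigma(y))$ --- which the paper derives by the same every-edge-lies-in-exactly-one-loop flux computation --- and Corollary~\ref{cor:range} handles the $\Sigma(x)=\Sigma(y)=0$ case. One remark: in the last bullet your counts, namely $c(y,r(x))$ loops of each class $(0,\pm 1)$ when $r(x)<r(y)$ and $c(x,r(y))$ loops of each class $(\pm 1,0)$ when $r(x)>r(y)$, are the correct ones and agree with the paper's own proof; the corollary as stated has these two counts transposed, so you have in effect corrected a typo rather than introduced an error.
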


        Observe that in a hitomezashi pattern $\Cloth_{\Z/M\Z \times \Z/N\Z}(x,y)$, each $x_j\in x$ determines the orientation of $N$ edges with vertices $((i, j), (i+1, j))$. Thus
        \[\text{\# rightward edges} - \text{\# leftward edges} = N\Sigma(x)\]
        Analogously
        \[\text{\# upward edges} - \text{\# downward edges} = M\Sigma(y)\]
        Let $\textbf{L}$ be the set of all hitomezashi loops in $\Cloth_{\Z/M\Z \times \Z/N\Z}(x,y)$. Since every edge is contained in exactly one hitomezashi loop, 
        \[\sum_{L \in \textbf{L}}(\Delta x/N, \Delta y/M) = (\Sigma(x), \Sigma(y))\]

         Thus the sum of the homology classes of hitomezashi loops in $\Cloth_{\Z/M\Z \times \Z/N\Z}(x,y)$ is $(\Sigma(x), \Sigma(y))$.

         We are now ready to prove Corollary \ref{cor:toroidalcount}.
	\begin{proof}
	The case where $\Sigma(x) \neq 0$ and $\Sigma(y)\neq 0$ is proven by Ren and Zhang in \cite[Section 4]{RenZhang}. The other cases follow immediately from Corollary \ref{cor:toroidal}.
         
	In the case where $\Sigma(x) = 0$ and $\Sigma(y)\neq 0$, we have that $c(y, r(x))$ is the number of minimally positively overflowing arcs of $y$, and $c(y, -r(x))$ is the number of minimally negatively overflowing arcs. Since the only possible nontrivial hitomezashi loops have homology class $\pm (0, 1)$ and the homology classes of hitomezashi loops must sum to zero, we must have $c(y, r(x)) - c(y, -r(x)) = \Sigma(x)$.
    
    If $\Sigma(y) > 0$, then any minimally negatively overflowing arc of $y$ must be a subset of $y$, making $c(y, -r(x))$ easy to compute. If $\Sigma(y) < 0$, then the same is true of $c(y, r(x))$.  Regardless, there must be $c(x, -\sgn(y)r(x))$ loops with homology class $(0, -\sgn(y))$.There must be $|\Sigma(y)|$ more loops with homology class $(0, \sgn(y))$ than with homology class $(0, -\sgn(y))$. Thus there must be $|\Sigma(y)| + c(x, -\sgn(y)r(x))$ loops with homology class $(0, \sgn(y))$.
	
	Of course the case when $\Sigma(x)\neq 0$ and $\Sigma(y) = 0$ follows from an identical argument.
	
	In the case when $\Sigma(x) = \Sigma(y) = 0$, as shown in Corollary \ref{cor:range} if $r(x) = r(y)$ then no nontrivial hitomezashi loops exist. If $r(x) < r(y)$ we get vertical hitomezashi loops. Since $\Sigma(y) = 0$ it is clear that the number of loops with homology class $(0, 1)$ and homology class $(0, -1)$ is equal. Thus they are both equal to $c(y, r(x))$. The case where $r(x) > r(y)$ follows from an identical argument.
	\end{proof}
	
	\section*{Acknowledgements}
	This research was conducted at the University of Minnesota Duluth REU with support from NSF Grant No. DMS-2409861, Jane Street Capital, and donations from Ray Sidney and Eric Wepsic. We thank Noah Kravitz, Mitchell Lee, Maya Sankar, and Tuong Le for helpful discussions during the research process. We thank Mitchell Lee and Eliot Hodges for their feedback during the editing process. Finally, we thank Joe Gallian and Colin Defant for their support, and the opportunity to participate in the Duluth REU.

\nocite{*}
\bibliographystyle{plain}

\end{document}